\g@addto@macro\bfseries{\boldmath}
\newcommand{\dV}{\,\mathrm{dV}}
\DeclareMathOperator{\curl}{curl}
\DeclareMathOperator{\Ric}{Ric}
\DeclareMathOperator{\tr}{tr}
\DeclareMathOperator{\vol}{vol}
\DeclareMathOperator{\OO}{O}
\DeclareMathOperator{\oo}{o}
\renewcommand{\div}{\mathrm{div}}
\newcommand{\C}{\mathbb{C}}
\newcommand{\Z}{\mathbb{Z}}
\newcommand{\R}{\mathbb{R}}
\newcommand{\K}{\mathcal{K}}
\renewcommand{\H}{\mathcal{H}}
\newcommand{\ddt}{\frac{d}{d t}}
\newcommand{\id}{\mathrm{id}}
\newcommand{\OC}[1]{\Omega_{C^\infty}^{#1}(M)}
\newcommand{\OL}[1]{\Omega_{L^2}^{#1}(M)}
\newcommand{\OD}[1]{\Omega_{\mathcal{D}'}^{#1}(M)}
\newcommand{\nh}{\hat\nabla}
\newcommand{\nb}{\bar\nabla}
\renewcommand{\sb}{\bar\ast}
\newcommand{\SOv}{\mathrm{SO}(4)}
\newcommand{\E}{\mathbf{E}}
\newcommand{\n}{\mathbf{n}}
\newcommand{\<}{\langle}
\renewcommand{\>}{\rangle}
 \newtheorem{theorem}{Theorem}[section]
 \newtheorem{lemma}[theorem]{Lemma}
 \newtheorem{corollary}[theorem]{Corollary}
 \theoremstyle{definition}
 \newtheorem{definition}[theorem]{Definition}
 \newtheorem{example}[theorem]{Example}
 \newtheorem{remark}[theorem]{Remark}
\title[The curl operator]{The curl operator on odd-dimensional manifolds}
\author[C.~B\"ar]{Christian B\"ar}
\address{Institut f\"ur Mathematik, Universit\"at Potsdam, Karl-Liebknecht-Str.~24-25, 14476 Potsdam, Germany}
\email{baer@math.uni-potsdam.de}
\date{\today}
\keywords{Maxwell equations, curl, Weyl asymptotics, $\zeta$-function, eigenvalue estimate, flat tori, spherical space forms}
\subjclass[2010]{58J50,78A40}
\begin{document}

\begin{abstract}
We study the spectral properties of $\curl$, a linear differential operator of first order acting on differential forms of appropriate degree on an odd-dimensional closed oriented Riemannian manifold.
In three dimensions its eigenvalues are the electromagnetic oscillation frequencies in vacuum without external sources.
In general, the spectrum consists of the eigenvalue $0$ with infinite multiplicity and further real discrete eigenvalues of finite multiplicity.
We compute the Weyl asymptotics and study the $\zeta$-function.
We give a sharp lower eigenvalue bound for positively curved manifolds and analyze the equality case.
Finally, we compute the spectrum for flat tori, round spheres and $3$-dimensional spherical space forms.
\end{abstract}
\maketitle


\section*{I. Introduction}

Let $\Omega\subset\R^3$ be a domain.
The Maxwell equations in vacuum in absence of external sources are
\begin{align}
\curl E + \partial_t B &= 0, \label{eq:Max1}\\
\curl B - \partial_t E &= 0, \label{eq:Max2}\\
\div E &= 0, \label{eq:Max3}\\
\div B &= 0.
\label{eq:Max4}
\end{align}
Here $E$ and $B$ are time-dependent vector fields on $\Omega$, the electric and magnetic fields, respectively.
The equations have to be complemented with suitable boundary conditions.
The ansatz 
$$
E(t,x) = e^{i\lambda t}E_0(x), 
\quad
B(t,x) = e^{i\lambda t}B_0(x),
$$
yields a solution to the first two equations if and only if
$$
\begin{pmatrix}
0 & -i\curl \\
i\curl & 0
\end{pmatrix}
\begin{pmatrix}
E_0 \\ B_0
\end{pmatrix}
=
\lambda
\begin{pmatrix}
E_0 \\ B_0
\end{pmatrix}.
$$
Thus the eigenvalues of the ``stationary Maxwell operator'' $\begin{pmatrix} 0 & -i\curl \\ i\curl & 0\end{pmatrix}$ on divergence free vector fields are regarded as the electromagnetic oscillation frequencies of $\Omega$.
This spectrum has been studied by Weyl \cite{Weyl} on bounded domains with smooth boundary.
Weyl showed the asymptotic law
$$
N(\lambda) = \frac{\vol(\Omega)}{3\pi^2}\cdot\lambda^3 + \oo(\lambda^3)
$$
as $\lambda\to\infty$.
Here $N(\lambda)$ denotes the number of eigenvalues whose modulus is bounded from above by $\lambda$.
Safarov \cite{Saf} improved this to 
$$
N(\lambda) = \frac{\vol(\Omega)}{3\pi^2}\cdot\lambda^3 + \OO(\lambda^2)
$$
and, under an additional assumption on the billards of the domain, even to
$$
N(\lambda) = \frac{\vol(\Omega)}{3\pi^2}\cdot\lambda^3 + \oo(\lambda^2).
$$
The case when the boundary of $\Omega$ has only Lipschitz regularity has also been investigated, see e.g.\ \cite{BirSol1087,Fil,Ven2010}.
Additional complications arise for nonsmooth dielectric permittivity and magnetic permiability.
We will, however, consider only the case when they are constant and can be normalized to be $1$ by a suitable choice of physical units.

Maxwell's equations \eqref{eq:Max1}--\eqref{eq:Max4} make sense on any oriented Riemannian $3$-manifold~$M$.
If the manifold is compact and without boundary we need not worry about boundary conditions.
Then $\curl$ turns out to be a selfadjoint operator and $\lambda$ is an eigenvalue of $\curl$ if and only if $\lambda$ and $-\lambda$ are eigenvalues of the stationary Maxwell operator.

We will study the spectrum of $\curl$ on closed oriented Riemannian manifolds.
In order to generalize it to higher dimensions it is convenient to reformulate it in terms of differential forms rather than vector fields.
In three dimensions, $\curl$ can be equivalently defined acting on $1$-forms by $\curl=*d$ where $d$ denotes the exterior differential and $*$ the Hodge-star operator.

More generally, if the dimension $n$ of $M$ is odd the operator $*d$ acts on $\frac{n-1}{2}$-forms.
It turns out that $*d$ is formally selfadjoint if $n\equiv 3$ mod $4$ and formally skewadjoint if $n\equiv 1$ mod $4$.
To obtain a selfadjoint operator in all odd dimensions we define $\curl=i*d$ in the latter case.
Similar generalizations to higher dimensions using differential forms have been considered in the literature \cite{Weyl52, Mill, Weck, DF2008, Fil}.
In \cite[Thm~1.3]{JS} the connection between classical and quantum ergodicity of $\curl$ has been studied.
We hope that our investigation of the curl operator in higher dimensions may prove useful for the understanding of extensions of electromagnetism to compactified extra dimensions as well as the the $p$-brane scenario.

The present paper is structured as follows.
In Sec.~\ref{sec:2}, we fix notations and recall the Hodge decomposition theorem.
In Sec.~\ref{sec:3}, we introduce the $\curl$-operator on odd-dimensional oriented Riemannian manifolds and show essential selfadjointness if the manifold is closed.
The operator is not elliptic, indeed it has an infinite-dimensional kernel.
But the rest of the spectrum is discrete, i.e., consists of eigenvalues of finite multiplicity, and the corresponding eigenforms are smooth.
In dimension $3$, restricting to the complement of the kernel is equivalent to imposing equations \eqref{eq:Max3} and \eqref{eq:Max4}.

The structure of the spectrum is investigated in Sec.~\ref{sec:4}.
If $n\equiv 1$ mod $4$ the spectrum turns out to be symmetric about $0$ but for $n\equiv 3$ mod $4$ this is in general not the case.
We give an explicit example for $n=3$.
Denoting the number of positive eigenvalues below $\lambda$ by $N_+(\lambda)$ and that of negative eigenvalues above $-\lambda$ by $N_-(\lambda)$ we prove the Weyl asymptotics
$$
N_\pm(\curl,\lambda)
=
\frac{\vol(M)}{2\cdot\pi^{\frac{n+1}{2}}\cdot n\cdot \frac{n-1}{2}!}\cdot\lambda^{n} + \OO(\lambda^{n-1}).
$$
as $\lambda\to\infty$.
Then, we introduce the $\zeta$-function of $\curl$ and prove its basic properties.
In particular, the value at the origin $\zeta(0)$ turns out to be an integer-valued topological invariant of the underlying manifold.
When taken modulo two $\zeta(0)$ gives Kervaire's semi-characteristic of $M$.

Interestingly, the $\eta$-invariant of $\curl$ has been studied long ago by Millson.
In \cite{Mill} he shows that it coincides with the $\eta$-invariant of the signature operator acting on forms of even degree.
This $\eta$-invariant occurs as a boundary contribution in the signature formula for manifolds with boundary due to Atiyah, Singer, and Patodi \cite[Thm.~4.14]{APS}.

In Sec.~\ref{sec:5}, we prove a sharp lower eigenvalue estimate if the curvature operator of $M$ is positive.
In three dimensions this can be relaxed to a lower Ricci curvature bound.
The equality case is also analyzed.

In Sec.~\ref{sec:6}, we compute the $\curl$-spectrum for flat tori and round spheres.
In these cases the spectrum is always symmetric about $0$.
In dimension $3$ we also treat spherical space forms and obtain a convenient criterion for the symmetry of the spectrum.
Suitable lens spaces then provide simple examples for nonsymmetric $\curl$-spectrum.

\section*{II. Differential forms}
\label{sec:2}
\stepcounter{section}

We start by fixing some notations.
Throughout this text $M$ will denote an $n$-dimensional Riemannian manifold.
For $p\in \{0,1,\ldots,n\}$, we denote by $\OC{p}$, $\OL{p}$, and $\OD{p}$ the space of complex-valued $p$-forms on $M$ which are smooth, square-integrable, and distributional, respectively.
On $\OL{p}$ we have the scalar product
$$
(\omega_1,\omega_2) = \int_M \<\omega_1,\omega_2\> \dV
$$
turning $\OL{p}$ into a Hilbert space.
Here, $\<\cdot,\cdot\>$ denotes the scalar product on forms induced by the Riemannian metric and $\dV$ the Riemannian volume measure.

The exterior differential is denoted by $d:\OD{p}\to \OD{p+1}$.
Now assume that $M$ carries an orientation.
Then, the Hodge-star operator $\ast:\OD{p}\to \OD{n-p}$ is defined and characterized on $\OL{p}$ by
$$
(\omega_1,\omega_2) = \int_M \bar\omega_1\wedge\ast\omega_2 \, .
$$
The operator formally adjoint to $d$ is given by
\begin{equation}
d^\dagger = (-1)^{n(p+1)+1}\ast d\ast : \OD{p}\to \OD{p-1} \, 
\label{eq:dt}
\end{equation}
see e.g.\ \cite[p.~21]{Rosenberg}.
Moreover, we have on $\OD{p}$
\begin{align}
\ast^2 &= (-1)^{p(n-p)}\,\id\, , \label{eq:*2}  \\
\ast^\dagger &= (-1)^{p(n-p)}\ast \,  , \label{eq:*t} \\
d^2 &=(d^\dagger)^2 = 0 \, , 
\end{align}
see e.g.\ \cite[p.~33]{Besse}.
The Hodge-Laplacian is defined by
$$
\Delta = dd^\dagger + d^\dagger d : \OD{p}\to\OD{p}.
$$
It commutes with $d$, $d^\dagger$ and $\ast$.
If $M$ is closed, i.e., compact and without boundary, then there is the Hodge decomposition \cite[Ch.~6]{Warner}
\begin{equation}
\OD{p} = \ker(\Delta) \oplus d\OD{p-1} \oplus d^\dagger\OD{p+1} \, .
\label{eq:Hodge}
\end{equation}
Since the Hodge-Laplacian is elliptic its kernel is finite-dimensional and contained in $\OC{p}$.
Moreover, \eqref{eq:Hodge} and elliptic regularity theory imply
\begin{align}
\ker(d) &= \ker(\Delta) \oplus d\OD{p-1}, \label{eq:kerd} \\
\ker(d^\dagger) &= \ker(\Delta) \oplus d^\dagger\OD{p+1}. \label{eq:kerdt}
\end{align}

\section*{III. The curl operator}
\label{sec:3}
\stepcounter{section}

From now on, $M$ will always be oriented and of odd dimension $n$.
We consider the operator $\ast d:\OD{(n-1)/2}\to \OD{(n-1)/2}$.
Equivalently, it would also be possible to consider $d\ast :\OD{(n+1)/2}\to \OD{(n+1)/2}$ but we fix the other convention.

\subsection{Formal selfadjointness}

\begin{lemma}\label{lemma:symmetric}
Let $M$ be an oriented Riemannian manifold of odd dimension $n$.
Then, $\ast d:\OD{(n-1)/2}\to \OD{(n-1)/2}$ is formally selfadjoint if $n\equiv 3$ mod $4$ and formally skewadjoint if $n\equiv 1$ mod $4$.
\end{lemma}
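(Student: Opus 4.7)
The plan is to verify formal (skew)selfadjointness by a direct adjoint computation, pushing $\ast$ across an $L^2$ pairing and then using \eqref{eq:dt} to bring the resulting operator back into the form $\ast d$ with a single tracked sign. Set $p=(n-1)/2$. The key preliminary observation is that $p(n-p)=p(p+1)$ is always even (product of consecutive integers), so \eqref{eq:*2} and \eqref{eq:*t} simplify on $p$-forms to $\ast\ast=\id$ and $\ast^\dagger=\ast$ with no sign; the same identities hold on $(p+1)$-forms, since $(p+1)(n-p-1)=(p+1)p$. This is the clean simplification available at the middle degree.

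For $\omega_1,\omega_2\in\OC{p}$ with at least one of compact support, I would then compute
\begin{align*}
(\ast d\omega_1,\omega_2)
&=(d\omega_1,\ast^\dagger\omega_2)
=(d\omega_1,\ast\omega_2)
=(\omega_1,d^\dagger\ast\omega_2)\\
&=(-1)^{n(p+2)+1}(\omega_1,\ast d\ast\ast\omega_2)
=(-1)^{n(p+2)+1}(\omega_1,\ast d\omega_2),
\end{align*}
where the step into the second line applies \eqref{eq:dt} for $d^\dagger$ on $(p+1)$-forms, and the very last step uses $\ast\ast=\id$ on $p$-forms. Thus $(\ast d)^\dagger=(-1)^{n(p+2)+1}\ast d$, and it remains only to read off the sign.

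A short calculation gives $n(p+2)+1=(n+1)(n+2)/2$. For $n=4k+1$ this equals $(2k+1)(4k+3)$, which is odd, so the sign is $-1$ and $\ast d$ is formally skewadjoint; for $n=4k+3$ it equals $2(k+1)(4k+5)$, which is even, so the sign is $+1$ and $\ast d$ is formally selfadjoint. The main obstacle is purely bookkeeping: making sure each instance of $\ast^\dagger$ is interpreted as the adjoint of the correct restriction of $\ast$ and that \eqref{eq:dt} is invoked at the right degree. The initial parity observation is what eliminates all stray signs and makes the final answer depend only on $n\bmod 4$.
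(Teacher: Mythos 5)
Your computation is correct and follows essentially the same route as the paper: both reduce $(\ast d)^\dagger$ to $d^\dagger\ast^\dagger$, invoke \eqref{eq:dt} at degree $\frac{n+1}{2}$ together with \eqref{eq:*2} and \eqref{eq:*t}, and use that $\frac{n-1}{2}\cdot\frac{n+1}{2}$ is even to kill the stray signs. Your final exponent $(n+1)(n+2)/2$ agrees in parity with the paper's $(n+3)/2+1$, so the conclusion matches in both congruence classes.
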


\begin{proof}
By \eqref{eq:dt}--\eqref{eq:*t} and the fact that $n$ is odd we have
\begin{align*}
(\ast d)^\dagger 
&=
d^\dagger \ast^\dagger
=
(-1)^{n(n+3)/2+1} \ast d \ast (-1)^{(n-1)(n+1)/4}\ast \\
&=
(-1)^{n(n+3)/2+1} \ast d
=
(-1)^{(n+3)/2+1} \ast d \, .
\qedhere
\end{align*}
\end{proof}

In order to always have a formally selfadjoint operator we propose the following:

\begin{definition}
The operator
$$
\curl := 
\begin{cases}
i\ast d & \mbox{if }n\equiv 1 \mbox{ mod }4, \\
\ast d & \mbox{if }n\equiv 3 \mbox{ mod }4,
\end{cases}
$$
acting on $\OD{(n-1)/2}$ is called the \emph{curl operator}.
\end{definition}

For $n=1$ we locally have $M=\R$ and the curl operator is noting but $\curl = i\ddt$ acting on functions.
Therefore we will assume $n\ge 3$.
Then, $\ast d$ is not elliptic; 
in fact its kernel contains the infinite-dimensional space $d\OD{(n-3)/2}$.
In particular, eigenforms for the eigenvalue $0$ can have low regularity.

\begin{lemma}\label{lemma:curlDeltaEigen}
Let $M$ be an oriented closed Riemannian manifold of odd dimension~$n$.
Let $\omega\in\OD{(n-1)/2}$ and $\lambda\in\C\setminus\{0\}$.
Then, the following are equivalent:
\begin{enumerate}[(i)]
\item \label{curleigen}
$\omega=\omega_++\omega_-$ where $\curl\omega_+=\lambda\omega_+$ and $\curl\omega_-=-\lambda\omega_-$;
\item\label{Deltaeigen}
$\Delta\omega=\lambda^2\omega$ and $\omega$ is of the form $\omega=d^\dagger\eta$ for some $\eta\in\OD{(n+1)/2}$.
\end{enumerate}
\end{lemma}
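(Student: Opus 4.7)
The plan is to reduce the equivalence to the single operator identity
$$
\curl^2 = d^\dagger d \quad\text{on } \OD{(n-1)/2},
$$
which, combined with $\Delta = d^\dagger d + d d^\dagger$, yields the decomposition
$$
\Delta = \curl^2 + d d^\dagger \quad\text{on } \OD{(n-1)/2}.
$$
Deriving this identity is the only genuinely technical step, and I expect the sign bookkeeping to be the main obstacle. Concretely, on $(n-1)/2$-forms one has $\curl^2 = \varepsilon_1 \ast d \ast d$, with $\varepsilon_1 = +1$ when $n \equiv 3$ mod $4$ and $\varepsilon_1 = -1$ when $n \equiv 1$ mod $4$ (from $i^2$); while on $(n+1)/2$-forms \eqref{eq:dt} gives $d^\dagger = \varepsilon_2 \ast d \ast$, with $\ast^2 = (-1)^{(n^2-1)/4}$ by \eqref{eq:*2}. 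A case check in each residue class modulo $4$ shows these signs conspire to yield $d^\dagger d = \curl^2$ uniformly.

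Granting this, the implication (ii) $\Rightarrow$ (i) is a pure factorization argument. If $\omega = d^\dagger \eta$, then $d^\dagger \omega = 0$ since $(d^\dagger)^2 = 0$, so the splitting above gives $\Delta\omega = \curl^2\omega$. Together with $\Delta\omega = \lambda^2\omega$ this yields $(\curl - \lambda)(\curl + \lambda)\omega = 0$, and I would then set
$$
\omega_+ := \tfrac{1}{2\lambda}(\curl + \lambda)\omega, \qquad \omega_- := \tfrac{1}{2\lambda}(\lambda - \curl)\omega.
$$
By construction $\omega = \omega_+ + \omega_-$, and $\curl\omega_\pm = \pm\lambda\omega_\pm$ follows immediately from the factorization.

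For the converse (i) $\Rightarrow$ (ii), the eigenvalue equations give $\curl^2\omega_\pm = \lambda^2\omega_\pm$ at once. The real content is to exhibit $\omega$ in the image of $d^\dagger$; once this is done, $d^\dagger\omega = 0$ and the splitting produces $\Delta\omega = \curl^2\omega = \lambda^2\omega$. I would do this by showing that each $\omega_\pm$ is a nonzero scalar multiple of $d^\dagger(\ast\omega_\pm)$: the identities $d^\dagger \ast = \varepsilon_2 \ast d \ast \ast = \varepsilon_2\varepsilon_3 \ast d$ on $\Omega^{(n+1)/2}$ (where $\varepsilon_3$ is the sign from $\ast^2$) reduce $d^\dagger(\ast\omega_\pm)$ to a nonzero multiple of $\ast d\omega_\pm$, which by the curl eigenvalue equation is a nonzero multiple of $\omega_\pm$ itself. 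An explicit $\eta$ is then a suitable linear combination of $\ast\omega_+$ and $\ast\omega_-$, completing the argument.
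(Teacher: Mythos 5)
Your proposal is correct, and its engine is the same as the paper's: the sign identity $d^\dagger d=\curl^2$ on $\OD{(n-1)/2}$ (which the paper uses in the form $d^\dagger d\omega=(-1)^{(n+1)/2}\ast d\ast d\,\omega=\curl\curl\omega$), together with the observation that $d^\dagger\ast$ is a nonzero multiple of $\ast d$, so that $\eta$ can be taken to be a multiple of $\ast\omega$; your $\varepsilon$-bookkeeping checks out in both residue classes mod $4$. The one place you genuinely diverge is the direction (ii) $\Rightarrow$ (i): the paper argues abstractly that $\curl$ preserves the finite-dimensional space $E(\Delta,\lambda^2)\cap d^\dagger\OD{(n+1)/2}$ and squares to $\lambda^2\cdot\id$ there, hence is diagonalizable with eigenvalues $\pm\lambda$; you instead write down the explicit spectral projections $\omega_\pm=\tfrac{1}{2\lambda}(\lambda\pm\curl)\omega$. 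Your version is more elementary and more constructive, and it does not invoke finite-dimensionality of the $\Delta$-eigenspace, so it avoids compactness for this direction as well — complementing the paper's own remark that closedness is not needed for (i) $\Rightarrow$ (ii). Both arguments are valid; yours trades the linear-algebra shortcut for a cleaner operator identity that also makes the decomposition $\omega=\omega_++\omega_-$ canonical.
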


\begin{proof}
To prove the implication \eqref{curleigen} $\Rightarrow$ \eqref{Deltaeigen} it suffices to consider $\curl\omega=\lambda\omega$.
Then, for $\tau= 1$ or $\tau = i$, we have $\lambda\omega=\tau\ast d \omega=\tau\ast d \ast\ast\omega=\pm\tau d^\dagger\ast\omega$.
Thus $\eta=\pm\lambda^{-1}\tau\ast\omega$ does the job.
Moreover,
$$
\Delta\omega 
=
(dd^\dagger + d^\dagger d)\omega
=
d^\dagger d\omega
=
(-1)^{(n+1)/2}\ast d\ast d \omega
=
\curl\curl\omega
=
\lambda^2\omega .
$$
Conversely, let $\omega=d^\dagger\eta$ satisfy $\Delta\omega=\lambda^2\omega$.
Then, the same computation shows $\curl\curl\omega=\lambda^2\omega$.
Since $\curl$ commutes with $\Delta$ it leaves its eigenspace $E(\Delta,\lambda^2)$ for the eigenvalue $\lambda^2$ invariant.
By \eqref{eq:dt} it also maps $d^\dagger\OD{(n+1)/2}$ to itself.
Thus $\curl$ restricts to an endomorphism on the finite-dimensional space $E(\Delta,\lambda^2)\cap d^\dagger\OD{(n+1)/2}$ whose square is $\lambda^2\cdot\id$.
This selfadjoint endomorphism can only have the eigenvalues $\lambda$ and $-\lambda$ and \eqref{curleigen} follows.
\end{proof}

For the eigenspace of any linear operator $L$ to the eigenvalue $\lambda$ we write $E(L,\lambda)$.
For the multiplicity we write $m(L,\lambda):=\dim(E(L,\lambda))$.

\begin{corollary}
Let $M$ be an oriented closed Riemannian manifold of odd dimension~$n$.
Then, eigenforms of $\curl$ to nonzero eigenvalues are smooth and the multiplicity of any nonzero eigenvalue is finite.
\end{corollary}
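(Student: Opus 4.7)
The plan is to reduce both claims to standard facts about the Hodge--Laplacian via Lemma~\ref{lemma:curlDeltaEigen}. Suppose $\omega \in \OD{(n-1)/2}$ satisfies $\curl\omega = \lambda\omega$ with $\lambda \neq 0$. Part (\ref{curleigen}) $\Rightarrow$ (\ref{Deltaeigen}) of Lemma~\ref{lemma:curlDeltaEigen} (applied with $\omega_+ = \omega$, $\omega_- = 0$) then yields $\Delta\omega = \lambda^2\omega$ and $\omega \in d^\dagger\OD{(n+1)/2}$.

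For smoothness, I would invoke ellipticity of $\Delta$: since $\omega$ is a distributional eigenform of the elliptic operator $\Delta$, elliptic regularity immediately gives $\omega \in \OC{(n-1)/2}$.

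For the multiplicity bound, I would observe that the assignment $\omega \mapsto \omega$ embeds $E(\curl,\lambda)$ into $E(\Delta,\lambda^2)$. Because $M$ is closed and $\Delta$ is elliptic and formally selfadjoint, $E(\Delta,\lambda^2)$ is finite-dimensional, so $m(\curl,\lambda) \le m(\Delta,\lambda^2) < \infty$.

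No step is really hard here; the only thing to take care of is to make sure the hypothesis $\omega \in d^\dagger\OD{(n+1)/2}$ required by Lemma~\ref{lemma:curlDeltaEigen} is already built into the proof of the implication (\ref{curleigen}) $\Rightarrow$ (\ref{Deltaeigen}) used above, so that applying the lemma to a bare eigenform $\omega$ of $\curl$ is legitimate. Inspecting that proof, $\eta = \pm\lambda^{-1}\tau\ast\omega$ is produced from $\omega$ alone, so this is fine.
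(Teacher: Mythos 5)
Your proposal is correct and follows essentially the same route as the paper: both reduce the claim to Lemma~\ref{lemma:curlDeltaEigen}, identifying nonzero-eigenvalue eigenforms of $\curl$ as eigenforms of the elliptic operator $\Delta$, whose eigenspaces are finite-dimensional and consist of smooth forms. Your closing check that the implication (\ref{curleigen}) $\Rightarrow$ (\ref{Deltaeigen}) applies to a bare eigenform of $\curl$ is a valid and worthwhile observation, but it matches what the paper implicitly uses when it rewrites the lemma as the eigenspace identity \eqref{eq:curlDeltaSpec}.
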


\begin{proof}
We may rewrite the statement of Lemma~\ref{lemma:curlDeltaEigen} as
\begin{equation}
E(\curl,\lambda) \oplus E(\curl,-\lambda) = E(\Delta,\lambda^2)\cap d^\dagger\OD{(n+1)/2} \, .
\label{eq:curlDeltaSpec}
\end{equation}
Since $E(\Delta,\lambda^2)$ is finite-dimensional and consists of smooth forms by elliptic theory the assertion follows.
\end{proof}

\begin{remark}
The proof of the implication \eqref{curleigen} $\Rightarrow$ \eqref{Deltaeigen} in Lemma~\ref{lemma:curlDeltaEigen} did not use the assumption that $M$ is closed.
Here $M$ might even be incomplete.
Thus smoothness of eigenforms of $\curl$ to nonzero eigenvalues is also true in this general case.
\end{remark}

\subsection{Selfadjointness}
By Lemma~\ref{lemma:symmetric} we know that $\curl$ defines a symmetric unbounded operator in the Hilbert space $\OL{(n-1)/2}$ with domain $\OC{(n-1)/2}$.

\begin{lemma}\label{lemma:selfadjoint}
Let $M$ be an oriented closed Riemannian manifold of odd dimension~$n$.
Then, $\curl$ with domain $\OC{(n-1)/2}$ is essentially selfadjoint in the Hilbert space $\OL{(n-1)/2}$.
\end{lemma}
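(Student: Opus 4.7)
The strategy is to exhibit a complete orthonormal basis of $\OL{(n-1)/2}$ made of smooth $\curl$-eigenforms lying in the domain $\OC{(n-1)/2}$; a symmetric operator possessing such a basis inside its domain is automatically essentially selfadjoint, since truncated eigenexpansions of any element of $\mathrm{dom}(\curl^*)$ then approximate it in the graph norm.

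The starting point is the essential selfadjointness of the Hodge Laplacian on $\OC{(n-1)/2}$, a standard consequence of ellipticity on the closed manifold $M$. This yields an orthonormal basis of $\OL{(n-1)/2}$ consisting of smooth eigenforms of $\Delta$ with finite-dimensional eigenspaces $E(\Delta,\mu_k)$ and $\mu_k \to \infty$. I then refine each eigenspace via the Hodge decomposition \eqref{eq:Hodge}: since $\Delta$ commutes with $d$ and $d^\dagger$, it preserves the three summands, so for $\mu_k > 0$,
$$
E(\Delta,\mu_k) = \bigl(E(\Delta,\mu_k)\cap d\,\OC{(n-3)/2}\bigr)\oplus\bigl(E(\Delta,\mu_k)\cap d^\dagger\OC{(n+1)/2}\bigr),
$$
while $E(\Delta,0)$ consists of the (finitely many) harmonic forms. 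On the harmonic summand and on the exact summand $d\omega=0$, hence $\curl\omega=0$; on the coexact summand, Lemma~\ref{lemma:curlDeltaEigen} shows that the symmetric endomorphism $\curl$ has square $\mu_k\cdot\id$ and thus diagonalizes with real eigenvalues $\pm\sqrt{\mu_k}$. Selecting orthonormal eigenbases on each finite-dimensional piece and taking the union over $k$ produces the desired basis of smooth $\curl$-eigenforms in $\OC{(n-1)/2}$.

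The only step that requires a moment of thought is that the Hodge decomposition really restricts to each $\Delta$-eigenspace; this follows from the uniqueness of \eqref{eq:Hodge} together with $\Delta(d\alpha)=d\,\Delta\alpha$ being exact and $\Delta(d^\dagger\beta)=d^\dagger\Delta\beta$ being coexact, which forces each Hodge component of a $\Delta$-eigenform to be itself a $\Delta$-eigenform with the same eigenvalue. No other obstacle appears: essential selfadjointness is then immediate from the standard criterion.
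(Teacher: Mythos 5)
Your argument is correct, but it takes a different route from the paper. The paper's proof is a four-line application of the deficiency-index criterion: it suffices to show $\ker(\curl^*\mp i)=0$, and if $\curl^*\omega=\pm i\omega$ then the implication (i)$\Rightarrow$(ii) of Lemma~\ref{lemma:curlDeltaEigen} (applied to the non-real eigenvalue $\lambda=\pm i$ and a distributional form $\omega$) gives $\Delta\omega=-\omega$, which is impossible since $\Delta\ge0$. You instead verify the other standard criterion, exhibiting a complete orthonormal system of smooth $\curl$-eigenforms inside $\OC{(n-1)/2}$ by refining each $\Delta$-eigenspace through the Hodge decomposition and diagonalizing $\curl$ on the coexact piece via Lemma~\ref{lemma:curlDeltaEigen}. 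Both arguments rest on the same core identity $\curl^2=d^\dagger d$; yours is longer but only ever uses Lemma~\ref{lemma:curlDeltaEigen} for real eigenvalues on finite-dimensional spaces of smooth forms, and as a byproduct it essentially establishes the spectral resolution that the paper proves separately afterwards (the theorem on the structure of the spectrum), whereas the paper's version is shorter at the cost of invoking the lemma for distributional eigenforms with $\lambda=\pm i$. Your two supporting claims --- that the Hodge decomposition restricts to each $E(\Delta,\mu_k)$, and that a symmetric operator with a complete orthonormal eigenbasis in its domain is essentially selfadjoint --- are both correct as you argue them.
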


\begin{proof}
It suffices to show that the adjoint operator $\curl^*$ (in the sense of functional analysis) of $\curl$ with domain $\OC{(n-1)/2}$ in $\OL{(n-1)/2}$ does not have nontrivial solutions of $\curl^*\omega=\pm i \omega$.
Then, $\omega\in\OL{(n-1)/2}$ is a distributional eigenform of $\curl$ to the eigenvalue $\pm i$.
By Lemma~\ref{lemma:curlDeltaEigen} $\omega$ is then an eigenform of $\Delta$ to the eigenvalue $-1$ and is, in particular, smooth.
Since $\Delta$ is nonnegative $\omega=0$.
\end{proof}

On $\R^7$ on can define a vector cross product and a corresponding curl operator based on the algebra of the octonions \cite{PY}.
Since this curl operator acts on vector fields while our curl in this case acts on $3$-forms which have fiber dimension $35$, there seems to be no relation.

\section*{IV. The spectrum}
\label{sec:4}
\stepcounter{section}

When we now speak of the spectrum of $\curl$ we mean the spectrum of its unique selfadjoint extension in $\OL{(n-1)/2}$.

\subsection{Structure of the spectrum}

\begin{theorem}
Let $M$ be an oriented closed Riemannian manifold of odd dimension~$n\ge3$.
Then, the continuous spectrum of $\curl$ is empty.
The point spectrum consists of the eigenvalue $0$ which has infinite multiplicity and the discrete spectrum.
\end{theorem}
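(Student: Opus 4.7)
The plan is to show that the unique selfadjoint extension $\overline{\curl}$ (supplied by Lemma~\ref{lemma:selfadjoint}) is diagonalizable by a countable orthonormal basis of smooth eigenforms, which is assembled from the spectral resolution of $\Delta$ via Lemma~\ref{lemma:curlDeltaEigen}. I begin with the kernel. Since the Hodge star is pointwise invertible, $\omega\in\ker(\overline{\curl})$ iff $d\omega=0$ distributionally. By \eqref{eq:kerd} the smooth closed $(n-1)/2$-forms are $\ker(\Delta)\oplus d\OC{(n-3)/2}$, and for $n\ge 3$ the latter space is infinite-dimensional (take infinitely many $(n-3)/2$-forms with pairwise disjoint bump supports and apply $d$). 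These smooth forms all lie in $\dom(\overline{\curl})$ and are annihilated there, so $0$ is an eigenvalue of infinite multiplicity.

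For the rest of the spectrum, set $V:=\ker(\overline{\curl})^\perp\subseteq\OL{(n-1)/2}$; it is invariant under $\overline{\curl}$ by selfadjointness. A short computation using \eqref{eq:dt} and \eqref{eq:*2} gives $\ast d=\pm d^\dagger\ast$ on $\OD{(n-1)/2}$, hence $\curl(\OC{(n-1)/2})\subseteq d^\dagger\OC{(n+1)/2}$, so $V\subseteq\overline{d^\dagger\OC{(n+1)/2}}$. Since $M$ is closed, the elliptic Hodge Laplacian $\Delta$ admits an orthonormal $L^2$-basis of smooth eigenforms with eigenvalues $0\le\mu_0\le\mu_1\le\cdots\to\infty$, each of finite multiplicity. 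Each finite-dimensional eigenspace $E(\Delta,\mu_k)$ respects the Hodge decomposition \eqref{eq:Hodge}; its $d^\dagger$-summand $E(\Delta,\mu_k)\cap d^\dagger\OD{(n+1)/2}$ lies in $V$, and for $\mu_k>0$ identity \eqref{eq:curlDeltaSpec} rewrites it as $E(\curl,\sqrt{\mu_k})\oplus E(\curl,-\sqrt{\mu_k})$. Taking the union over $k$ produces an orthonormal basis of $V$ consisting of $\curl$-eigenforms for the nonzero eigenvalues $\pm\sqrt{\mu_k}$, each of multiplicity at most $\dim E(\Delta,\mu_k)<\infty$ and with no finite accumulation point. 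Thus $\overline{\curl}|_V$ has purely discrete spectrum, and combined with the kernel this precludes any continuous spectrum.

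The main point requiring care is bookkeeping between smooth, distributional, and $L^2$ versions of the operators. The essential observations are that $\Delta$-eigenforms are smooth by elliptic regularity, that $\curl$ commutes with $\Delta$ (as $\Delta$ commutes with $\ast$ and $d$) and preserves $d^\dagger\OD{(n+1)/2}$, and that a smooth $\Delta$-eigenform at $\mu>0$ that is distributionally $d^\dagger$-exact is automatically smoothly $d^\dagger$-exact (by uniqueness of its Hodge decomposition). Once these reductions are in place, the remaining work inside each $E(\Delta,\mu_k)$ is finite-dimensional linear algebra handled by Lemma~\ref{lemma:curlDeltaEigen}.
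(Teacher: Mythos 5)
Your proof is correct and follows essentially the same route as the paper: the kernel is identified via \eqref{eq:kerd}, and its orthogonal complement is resolved through the spectral decomposition of $\Delta$ on the invariant subspace of coexact forms together with Lemma~\ref{lemma:curlDeltaEigen} (in the form \eqref{eq:curlDeltaSpec}). The additional bookkeeping you provide on completeness of the resulting eigenbasis only makes explicit what the paper leaves implicit.
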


\begin{proof}
By \eqref{eq:kerd} the kernel of $\curl$ is given by
$$
\ker(\curl) = \ker(d) = \ker(\Delta) \oplus \big(d\OD{(n-3)/2} \cap \OL{(n-1)/2}\big)
$$
where the second summand is obviously infinite-dimensional.
For the orthogonal complement Lemma~\ref{lemma:curlDeltaEigen} provides us with the spectral resolution
\begin{align*}
d^\dagger\OD{(n+1)/2} \cap \OL{(n-1)/2}
&=
\bigoplus_{\mu\in\sigma(\Delta)\setminus\{0\}}E(\Delta,\mu)\cap d^\dagger\OD{(n+1)/2} \\
&=
\bigoplus_{\mu\in\sigma(\Delta)\setminus\{0\}} E(\curl,\sqrt{\mu}) \oplus E(\curl,-\sqrt{\mu}) \, .
\end{align*}
Here $\sigma(\Delta)$ denotes the spectrum of the selfadjoint extension of $\Delta$ and the sum is a sum of Hilbert spaces in $\OL{(n-1)/2}$. 
Recall that $d^\dagger\OD{(n+1)/2}$ is left invariant by $\Delta$.
\end{proof}

\subsection{Symmetry of the spectrum}

Since $\curl$ has positive and negative eigenvalues the question arises whether the spectrum is symmetric about $0$.

\begin{theorem}\label{thm:SpecSymDim}
Let $M$ be an oriented closed Riemannian manifold of odd dimension~$n$ with $n\equiv 1$ mod $4$.
Then, the spectrum of $\curl$ is symmetric about $0$.
\end{theorem}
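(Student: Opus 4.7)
The plan is to exhibit an antilinear involution on $\OL{(n-1)/2}$ that anticommutes with $\curl$; this will force the spectrum to be symmetric about $0$. The natural candidate is fibrewise complex conjugation $C\omega:=\bar\omega$, which is well-defined because $\OL{(n-1)/2}$ is the complexification of the space of real $L^2$-forms. Since $n\equiv 1$ mod $4$, we have $\curl=i\ast d$, and the key structural observation is that the exterior differential $d$ and the Hodge star $\ast$ are both \emph{real} operators: in any orthonormal oriented local frame, their matrix coefficients are real, so they commute with $C$.

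First I would verify that $C$ is an isometric antilinear involution of $\OL{(n-1)/2}$ that maps $\OC{(n-1)/2}$ to itself, hence preserves the core on which $\curl$ is essentially selfadjoint (Lemma~\ref{lemma:selfadjoint}). Next, I would compute the anticommutation: for any $\omega\in\OC{(n-1)/2}$,
\begin{equation*}
\curl(C\omega) = i\ast d\,\bar\omega = i\,\overline{\ast d\,\omega} = -\overline{i\ast d\,\omega} = -C(\curl\omega).
\end{equation*}
By density and continuity, this identity extends to the domain of the unique selfadjoint extension of $\curl$.

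From the anticommutation, if $\lambda\in\R$ is an eigenvalue of $\curl$ and $\omega\in E(\curl,\lambda)$, then $\curl(C\omega)=-C(\curl\omega)=-\lambda\,C\omega$, so $C$ maps $E(\curl,\lambda)$ into $E(\curl,-\lambda)$. Because $C^2=\id$, the restriction $C\colon E(\curl,\lambda)\to E(\curl,-\lambda)$ is a bijection. An antilinear bijection between finite-dimensional complex vector spaces preserves complex dimension, giving $m(\curl,\lambda)=m(\curl,-\lambda)$, which is exactly the claimed symmetry.

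The only conceptual subtlety, which I view as the main (minor) obstacle, is checking that $C$ interacts correctly with the selfadjoint realization rather than just the formal operator on smooth forms. This is handled by the fact that $C$ is an isometric involution that preserves the essentially selfadjoint core, so it automatically preserves the domain of the closure and the spectral projections; the rest is a direct calculation.
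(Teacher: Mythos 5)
Your proof is correct, and it rests on exactly the same observation as the paper's: since $\ast$ and $d$ are real operators, the factor of $i$ in $\curl=i\ast d$ for $n\equiv 1$ mod $4$ forces the spectrum to be conjugation-symmetric. The paper packages this by noting that $\ast d=-i\curl$ restricts to a real skewsymmetric endomorphism of each finite-dimensional space $E(\Delta,\mu)\cap d^\dagger\OD{(n+1)/2}$, whose eigenvalues $\pm i\sqrt{\mu}$ therefore occur with equal multiplicity, whereas you phrase it globally via the antilinear involution $C\omega=\bar\omega$ anticommuting with $\curl$; both are valid and essentially equivalent.
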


\begin{proof}
If $n\equiv 1$ mod $4$ then $*d=-i\curl$ restricts to a real skewsymmetric endomorphism on $E(\Delta,\mu)\cap d^\dagger\OD{(n+1)/2}$.
Thus on this subspace $-i\curl$ has the eigenvalues $i\sqrt{\mu}$ and $-i\sqrt{\mu}$ with equal multiplicity.
Hence, $\curl$ itself has the eigenvalues $\sqrt{\mu}$ and $-\sqrt{\mu}$ with equal multiplicity.
\end{proof}

In the last section we will exhibit a $3$-dimensional example with nonsymmetric spectrum.
But even when $n\equiv 3$ mod $4$ there are situations where the spectrum is necessarily symmetric.

\begin{theorem}\label{thm:SpecSymIso}
Let $M$ be an oriented closed Riemannian manifold of odd dimension~$n$.
Assume there exists an orientation reversing isometry $f:M\to M$.

Then, the spectrum of $\curl$ is symmetric about $0$.
\end{theorem}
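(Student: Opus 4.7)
The plan is to exploit the orientation-reversing isometry $f$ as a ``symmetry'' intertwining $\curl$ with $-\curl$. The pullback $f^*$ acts on differential forms of every degree, and since $f$ is a diffeomorphism preserving the Riemannian metric, it extends to a unitary operator on $\OL{(n-1)/2}$. The first step is to record the two commutation rules: $f^*$ always commutes with the exterior derivative, and $f^*$ anti-commutes with the Hodge star because the Hodge star depends on the orientation and $f$ reverses it, so $f^*\circ * = -*\circ f^*$.

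Combining these, for $n\equiv 3\pmod 4$ one computes $f^*(\curl\omega)=f^*(*d\omega)=-*(d(f^*\omega))=-\curl(f^*\omega)$, and the same computation with the factor $i$ inserted handles $n\equiv 1\pmod 4$. So in all cases $f^*\circ\curl=-\curl\circ f^*$ on smooth $(n-1)/2$-forms. This identity passes to the self-adjoint extension by density (or equivalently by noting that $f^*$ preserves the domain $\OC{(n-1)/2}$ on which $\curl$ is essentially self-adjoint by Lemma~\ref{lemma:selfadjoint}).

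Once the intertwining relation is established, the symmetry of the spectrum is immediate: if $\omega\in E(\curl,\lambda)$ then $\curl(f^*\omega)=-f^*(\curl\omega)=-\lambda\,f^*\omega$, so $f^*$ sends $E(\curl,\lambda)$ into $E(\curl,-\lambda)$. Since $f^*$ is invertible (with inverse $(f^{-1})^*$), it restricts to a linear isomorphism between these eigenspaces, giving $m(\curl,\lambda)=m(\curl,-\lambda)$ for every nonzero $\lambda\in\R$.

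The only point requiring a bit of care is the sign in $f^*\circ * = -*\circ f^*$; this is where the orientation-reversing hypothesis enters and it is the single place the argument could go wrong, so I would state and briefly justify this identity explicitly (either via the defining characterization of $*$ using the volume form, which changes sign under $f^*$, or by choosing local orthonormal frames). Everything else is a direct consequence.
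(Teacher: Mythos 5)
Your proposal is correct and follows essentially the same route as the paper: both arguments rest on the fact that $f^*$ commutes with $d$ and anticommutes with $\ast$, hence anticommutes with $\curl$, so that $f^*$ restricts to an isomorphism $E(\curl,\lambda)\to E(\curl,-\lambda)$. Your additional remarks on the factor $i$ in the case $n\equiv 1$ mod $4$ and on passing to the selfadjoint extension are just a more careful spelling-out of the same argument.
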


\begin{proof}
The map $f$ acts by pull-back on $\OD{(n-1)/2}$ and commutes with $d$.
Since it is an orientation reversing isometry it anticommutes with the Hodge-star operator.
Hence, it anticommutes with $\curl$.
Thus $f^*$ restricts to an isomorphism $E(\curl,\lambda)\to E(\curl,-\lambda)$.
\end{proof}

\begin{corollary}
Let $M$ be an oriented closed Riemannian symmetric space of odd dimension~$n$.
Then, the spectrum of $\curl$ is symmetric about $0$.
\end{corollary}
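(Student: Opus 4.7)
The plan is to reduce this to Theorem~\ref{thm:SpecSymIso} by exhibiting an orientation-reversing isometry on any symmetric space of odd dimension.

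Recall that a Riemannian symmetric space is, by definition, a Riemannian manifold such that for every point $p\in M$ there exists a global isometric involution $s_p:M\to M$ (the geodesic symmetry at $p$) with $s_p(p)=p$ and differential $(ds_p)_p=-\id_{T_pM}$. I would fix any point $p$ and examine the orientation behavior of $s_p$ at $p$. Since $\dim M = n$ is odd,
$$
\det\bigl((ds_p)_p\bigr) \;=\; \det(-\id_{T_pM}) \;=\; (-1)^n \;=\; -1,
$$
so $s_p$ reverses orientation at $p$. Because $M$ is connected and $s_p$ is smooth (in fact an isometry), the sign of the Jacobian determinant is locally constant with respect to any choice of orientation, so $s_p$ is orientation reversing on all of $M$.

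Having produced an orientation-reversing isometry $f := s_p$ of $M$, Theorem~\ref{thm:SpecSymIso} applies directly and yields that the spectrum of $\curl$ is symmetric about $0$.

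There is no real obstacle here; the only substantive ingredient is the standard fact that each geodesic symmetry on a (globally) symmetric space extends to a global isometry, which is part of the definition. Note also that this unified argument does not require splitting into the cases $n\equiv 1$ and $n\equiv 3 \pmod 4$: although Theorem~\ref{thm:SpecSymDim} already handles the former case without any geometric hypothesis, the geodesic symmetry argument covers both parities simultaneously.
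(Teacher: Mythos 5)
Your proof is correct and is essentially identical to the paper's: both take the geodesic symmetry at a point, observe that its differential at that point is $-\id$ and hence has determinant $(-1)^n=-1$ in odd dimension, and then invoke Theorem~\ref{thm:SpecSymIso}. The extra detail you supply (local constancy of the Jacobian sign to globalize orientation reversal) is a fine elaboration of what the paper leaves implicit.
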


\begin{proof}
Let $f$ be the geodesic reflection about a point in $M$.
Since $M$ is symmetric this is an isometry and since $n$ is odd $f$ is orientation reversing.
\end{proof}

Examples for such symmetric spaces are flat tori, round spheres, compact Lie groups with biinvariant metrics etc.

\subsection{Weyl asymptotics}
To examine the asymptotic behavior of large eigenvalues we introduce the eigenvalue counting functions and set for $\lambda>0$
$$
N_+(\curl,\lambda) := \sum_{0<\lambda'\leq\lambda}m(\curl,\lambda')
$$
and 
$$
N_-(\curl,\lambda) := \sum_{0<\lambda'\leq\lambda}m(\curl,-\lambda') \, .
$$
Hence, $N_+(\lambda)$ is the total number of positive eigenvalues below $\lambda$ and $N_-(\lambda)$ is the total number of negative eigenvalues above $-\lambda$.
Similarly, we have the counting functions for the Hodge-Laplacians
$$
N(p,\lambda) := \sum_{0<\lambda'\leq\lambda}m(\Delta|_{\OL{p}},\lambda') \, .
$$

\begin{lemma}\label{lem:N++N-}
Let $M$ be an oriented closed Riemannian manifold of odd dimension~$n$.
Then,
$$
N_+(\curl,\lambda)+N_-(\curl,\lambda)
=
(-1)^{\frac{n-1}{2}}\sum_{p=0}^{\frac{n-1}{2}}(-1)^p N(p,\lambda^2) .
$$
\end{lemma}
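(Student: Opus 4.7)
The plan is to reduce the counting function for $\curl$ to counting functions for $\Delta$ on each form degree via the Hodge decomposition, with the key input being an isomorphism provided by $d$ between coexact and exact eigenforms of $\Delta$ at the same positive eigenvalue.

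\textbf{Step 1: Translate to $\Delta$-counts.} By \eqref{eq:curlDeltaSpec} (the reformulation of Lemma~\ref{lemma:curlDeltaEigen}), for each $\lambda'>0$ one has $\dim E(\curl,\lambda')+\dim E(\curl,-\lambda')=\dim\bigl(E(\Delta|_{\OL{(n-1)/2}},\lambda'^2)\cap d^\dagger\OL{(n+1)/2}\bigr)$. Summing over $0<\lambda'\le\lambda$ gives
$$
N_+(\curl,\lambda)+N_-(\curl,\lambda)=A_{(n-1)/2}(\lambda^2),
$$
where for each degree $p$ and $\mu>0$ I set $a_p(\mu):=\dim\bigl(E(\Delta|_{\OL{p}},\mu)\cap d^\dagger\OL{p+1}\bigr)$, $b_p(\mu):=\dim\bigl(E(\Delta|_{\OL{p}},\mu)\cap d\OL{p-1}\bigr)$, and $A_p(\lambda^2):=\sum_{0<\mu\le\lambda^2}a_p(\mu)$, $B_p(\lambda^2):=\sum_{0<\mu\le\lambda^2}b_p(\mu)$.

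\textbf{Step 2: Hodge decomposition on eigenspaces.} Since $\Delta$ commutes with $d$ and $d^\dagger$, the Hodge decomposition \eqref{eq:Hodge} restricted to each positive eigenspace of $\Delta$ yields $m(\Delta|_{\OL{p}},\mu)=a_p(\mu)+b_p(\mu)$ for every $\mu>0$. Summing, $N(p,\lambda^2)=A_p(\lambda^2)+B_p(\lambda^2)$.

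\textbf{Step 3: The shift isomorphism.} The essential point is that for $\mu>0$, the exterior derivative restricts to a linear isomorphism
$$
d:E(\Delta|_{\OL{p}},\mu)\cap d^\dagger\OL{p+1}\;\xrightarrow{\ \sim\ }\;E(\Delta|_{\OL{p+1}},\mu)\cap d\OL{p},
$$
with inverse $\mu^{-1}d^\dagger$. Indeed, if $\omega=d^\dagger\xi$ lies in the left-hand side then $d\omega$ is $d$-exact and $\Delta$-eigen with eigenvalue $\mu$; since $d^\dagger\omega=0$ and $(d^\dagger)^2=0$, we get $\mu\omega=\Delta\omega=d^\dagger d\omega$, so $d^\dagger d\omega=\mu\omega$, confirming $\mu^{-1}d^\dagger\circ d=\id$. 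The reverse composition is checked symmetrically, using that elements of $d\OL{p}$ are $d$-closed. Consequently $a_p(\mu)=b_{p+1}(\mu)$ and hence $A_p=B_{p+1}$ as functions of $\lambda^2$.

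\textbf{Step 4: Telescoping.} With the convention $B_0=0$ (no $(-1)$-forms) and the identities above,
\begin{align*}
\sum_{p=0}^{(n-1)/2}(-1)^pN(p,\lambda^2)
&=\sum_{p=0}^{(n-1)/2}(-1)^pA_p+\sum_{p=0}^{(n-1)/2}(-1)^pB_p\\
&=\sum_{p=0}^{(n-1)/2}(-1)^pA_p+\sum_{p=1}^{(n-1)/2}(-1)^pA_{p-1}\\
&=(-1)^{(n-1)/2}A_{(n-1)/2}(\lambda^2),
\end{align*}
all middle terms cancelling. Multiplying by $(-1)^{(n-1)/2}$ and invoking Step~1 yields the claimed identity.

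The only nonroutine step is Step~3; once the isomorphism $d$ between coexact and exact $\mu$-eigenforms is in place, Steps~1, 2, 4 are bookkeeping. I would expect the proof in the paper to proceed in essentially this way, perhaps phrasing Step~3 as a direct calculation with $\Delta=dd^\dagger+d^\dagger d$ on the appropriate Hodge summand.
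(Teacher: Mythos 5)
Your proposal is correct and follows essentially the same route as the paper: the paper's proof uses precisely your Step 3 isomorphism (phrased as a commutative diagram $d\colon d^\dagger\OD{p+1}\xrightarrow{\cong} d\OD{p}$ intertwining $\Delta$) together with the Hodge decomposition of the positive eigenspaces, and then telescopes inductively exactly as in your Step 4. The only cosmetic difference is that the paper leaves the inverse $\mu^{-1}d^\dagger$ and the cancellation implicit, whereas you write them out.
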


\begin{proof}
The commutative diagram 
$$
\xymatrix{
d^\dagger\OD{p+1}\ar[d]^\Delta \ar[r]^d_\cong & d\OD{p} \ar[d]^\Delta \\
d^\dagger\OD{p+1} \ar[r]^d_\cong & d\OD{p}  
}
$$
shows that for fixed $\lambda>0$
\begin{align*}
m(\Delta|_{d^\dagger\OD{(n+1)/2}},\lambda^2) 
&=
m(\Delta|_{\OD{(n-1)/2}},\lambda^2) - m(\Delta|_{d\OD{(n-3)/2}},\lambda^2) \\
&=
m(\Delta|_{\OD{(n-1)/2}},\lambda^2) - m(\Delta|_{d^\dagger\OD{(n-1)/2}},\lambda^2) \, .
\end{align*}
Proceeding inductively we get
$$
m(\Delta|_{d^\dagger\OD{(n+1)/2}},\lambda^2)
=
(-1)^{\frac{n-1}{2}}\sum_{p=0}^{\frac{n-1}{2}}(-1)^p\,  m(\Delta|_{\OD{p}},\lambda^2)\, 
$$
and hence, by \eqref{eq:curlDeltaSpec},
\begin{align}
m(\curl,\lambda) + m(\curl,-\lambda)  
&= (-1)^{\frac{n-1}{2}}\sum_{p=0}^{\frac{n-1}{2}}(-1)^p\,  m(\Delta|_{\OD{p}},\lambda^2)\, .
\label{eq:mlambda}
\end{align}

Summation over $\lambda$ proves the assertion.
\end{proof}

\begin{theorem}\label{thm:Weyl}
Let $M$ be an oriented closed Riemannian manifold of odd dimension~$n$.
Then, as $\lambda\to\infty$, 
$$
N_\pm(\curl,\lambda)
=
\frac{\vol(M)}{2\cdot\pi^{\frac{n+1}{2}}\cdot n\cdot \frac{n-1}{2}!}\cdot\lambda^{n} + \OO(\lambda^{n-1}).
$$
\end{theorem}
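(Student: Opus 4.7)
The plan is to combine Lemma~\ref{lem:N++N-} with the sharp Weyl law for the Hodge Laplacian to pin down $N_+(\curl,\lambda)+N_-(\curl,\lambda)$, and then argue separately that $N_+(\curl,\lambda)-N_-(\curl,\lambda)=\OO(\lambda^{n-1})$, so the leading Weyl term splits evenly between positive and negative eigenvalues.

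First I would invoke H\"ormander's classical sharp Weyl law for the positive, elliptic, self-adjoint Hodge Laplacian on $p$-forms,
$$
N(p,\mu) = \frac{\binom{n}{p}\,\vol(M)}{(4\pi)^{n/2}\,\Gamma(\tfrac{n}{2}+1)}\,\mu^{n/2} + \OO(\mu^{(n-1)/2}),
$$
and set $\mu=\lambda^2$. Substituting into Lemma~\ref{lem:N++N-}, collapsing the alternating partial binomial sum via $\sum_{p=0}^{k}(-1)^p\binom{n}{p}=(-1)^k\binom{n-1}{k}$ at $k=(n-1)/2$, and using $\Gamma(\tfrac{n}{2}+1) = \tfrac{n!\sqrt{\pi}}{2^n\cdot\frac{n-1}{2}!}$, I obtain
$$
N_+(\curl,\lambda)+N_-(\curl,\lambda) = \frac{\vol(M)}{\pi^{(n+1)/2}\cdot n\cdot \frac{n-1}{2}!}\,\lambda^n + \OO(\lambda^{n-1}),
$$
which is exactly twice the leading coefficient claimed for each $N_\pm$.

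Next I would show $|N_+(\curl,\lambda)-N_-(\curl,\lambda)|=\OO(\lambda^{n-1})$. When $n\equiv 1$ mod $4$ this is immediate from Theorem~\ref{thm:SpecSymDim}, so assume $n\equiv 3$ mod $4$. By Lemma~\ref{lemma:curlDeltaEigen} every non-zero eigenform of $\curl$ lies in the co-exact subspace $d^\dagger\OD{(n+1)/2}\cap\OL{(n-1)/2}$, on which $\curl$ is a first-order self-adjoint operator whose principal symbol squares to $|\xi|^2\cdot\id$ and hence has eigenvalues $\pm|\xi|$ of equal multiplicity on $T^*M\setminus 0$. The sharp Weyl asymptotics for first-order self-adjoint operators with such a symbol then give $N_+$ and $N_-$ individually with identical leading coefficient and remainder $\OO(\lambda^{n-1})$; combined with the sum just computed this yields the theorem.

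The main obstacle is rigorously carrying out the last step: $\curl$ is \emph{not} elliptic on the full bundle of $(n-1)/2$-forms, and the co-exact subspace is the range of the pseudo-differential Hodge projection rather than the smooth sections of a vector subbundle. One clean remedy is to enlarge $\curl$ on its orthogonal complement by $d+d^\dagger$ so as to produce a globally elliptic first-order self-adjoint extension on $\Lambda^* T^*M$, apply H\"ormander's Weyl theorem directly, and then subtract the contribution from the enlarged kernel; alternatively one can microlocally analyse the wave trace $\tr\bigl(\Pi_{\mathrm{coex}}\,e^{it\,\curl}\bigr)$ near $t=0$ and invoke a Fourier tauberian theorem to split off $N_\pm$ with sharp remainder.
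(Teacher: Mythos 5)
Your first step---feeding the sharp Weyl law for the Hodge Laplacians into Lemma~\ref{lem:N++N-} and collapsing the alternating binomial sum---reproduces exactly the paper's computation of the leading coefficient, and your arithmetic (the identity $\sum_{p=0}^{k}(-1)^p\binom{2k+1}{p}=(-1)^k\binom{2k}{k}$ and the duplication formula for $\Gamma(\tfrac n2+1)$) is correct. The genuine gap is in the second step, the separation of $N_+$ from $N_-$ when $n\equiv 3 \bmod 4$. Your main-line sentence ``the sharp Weyl asymptotics for first-order self-adjoint operators with such a symbol then give $N_\pm$ individually with remainder $\OO(\lambda^{n-1})$'' is not a valid appeal to H\"ormander's theorem, precisely for the reason you yourself then name: $\curl$ is not elliptic on $\Lambda^{(n-1)/2}T^*M$, and the coexact subspace is cut out by the (pseudo-differential) Hodge projection, not by a subbundle. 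Neither of your two proposed remedies is carried out: remedy (a) is essentially the Millson/APS identification of $\curl$ on coexact forms with the even signature operator $B^{\mathrm{ev}}$ \cite{Mill,APS}, which requires proving that the remaining spectrum of $B^{\mathrm{ev}}$ is symmetric so that $N_+(B^{\mathrm{ev}},\lambda)-N_-(B^{\mathrm{ev}},\lambda)=N_+(\curl,\lambda)-N_-(\curl,\lambda)$ --- a nontrivial lemma you would have to supply; remedy (b) amounts to reproving the needed theorem from scratch.

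The paper closes exactly this gap by citing Ivrii \cite{Ivrii}, whose theorem is formulated for operators acting in a subspace $H\subset L^2$ determined by a symbol-level orthoprojection $\pi(\xi)$ --- i.e.\ precisely the Maxwell situation with $H=d^\dagger\OD{(n+1)/2}\cap\OL{(n-1)/2}$ and $\pi(\xi)$ the projection onto the complement of $\xi\wedge\Lambda^{(n-3)/2}T^*_xM$. This yields $N_\pm(\curl,\lambda)=\kappa_\pm\lambda^n+\OO(\lambda^{n-1})$ individually, with explicit integral formulas for $\kappa_\pm$; the equality $\kappa_+=\kappa_-$ then follows from the oddness of the principal symbol in $\xi$ (so $\hat\pi_\pm(-\xi)=\hat\pi_\mp(\xi)$), with no separate estimate of the difference $N_+-N_-$ needed, and the sum formula from Lemma~\ref{lem:N++N-} is used only to evaluate $\kappa_++\kappa_-$ (so only the leading term of the Hodge Weyl law is required there). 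If you want to keep your architecture, the cleanest fix is to invoke Ivrii's theorem for the difference step as well; your observation that the case $n\equiv 1\bmod 4$ is already settled by Theorem~\ref{thm:SpecSymDim} is correct but does not help in the case that actually matters.
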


\begin{proof}
We apply \cite[Thm.~0.1]{Ivrii} to $A=\curl$ and the subspace $H=d^\dagger\OD{\frac{n+1}{2}}\cap \OL{\frac{n-1}{2}}$ of $\OL{\frac{n-1}{2}}$.
In other words, $H$ is the $L^2$-orthogonal complement of the kernel of $\curl$.
Then, we get $N_\pm(\curl,\lambda)=\kappa_\pm \lambda^n + \OO(\lambda^{n-1})$ where 
\begin{equation}
\kappa_\pm= (2\pi)^{-n}\int_{T^*M}\tr (\hat\pi_{\pm}(\xi)\pi(\xi))dxd\xi.
\label{eq:kappapm}
\end{equation}
Here $\pi(\xi)$ is the orthoprojection onto the orthogonal complement of $\xi\wedge\Lambda^{\frac{n-3}{2}}T^*M$ in $\Lambda^{\frac{n-1}{2}}T^*_xM$ and $\hat\pi_+(\xi) = \hat\pi(\xi,1)-\hat\pi(\xi,0)$ as well as $\hat\pi_-(\xi) = \hat\pi(\xi,0)-\hat\pi(\xi,1)$ where $\hat\pi(\xi,\lambda)$ is the spectral resolution of the principal symbol of $\curl$.
Since $\curl$ is a differential operator of first order its principal symbol depends linearly on $\xi$ and hence $\hat\pi_\pm(-\xi)=\hat\pi_\mp(\xi)$.
This implies
$$
\tr (\hat\pi_{\pm}(-\xi)\pi(-\xi))
=
\tr (\hat\pi_{\mp}(\xi)\pi(\xi))
$$
and therefore $\kappa_+=\kappa_-$.

It remains to determine this coefficient.
It is known (see e.g.\ \cite[Cor.~2.43]{BGV}) that $N(p,\lambda)$ has the following asymptotics as $\lambda\to\infty$:
\begin{equation}
N(p,\lambda) \sim \frac{{n \choose p}\cdot\vol(M)}{(4\pi)^{n/2}\cdot\Gamma(\frac{n}{2}+1)} \lambda^{n/2}.
\label{eq:Weyl}
\end{equation}
Inserting this into Lemma~\ref{lem:N++N-} yields
\begin{align*}
N_+(\curl,\lambda)+N_-(\curl,\lambda)
&\sim
(-1)^{\frac{n-1}{2}}\sum_{p=0}^{\frac{n-1}{2}}(-1)^p  \frac{{n \choose p}\cdot\vol(M)}{(4\pi)^{n/2}\cdot\Gamma(\frac{n}{2}+1)} \lambda^{n} \\
&=
\frac{{n-1 \choose \frac{n-1}{2}}\cdot\vol(M)}{(4\pi)^{n/2}\cdot\Gamma(\frac{n}{2}+1)} \lambda^{n}.
\end{align*}
Here we employed the formula
\begin{equation*}
\sum_{p=0}^k (-1)^p {2k+1 \choose p} = (-1)^k {2k \choose k}
\end{equation*}
with $k=\frac{n-1}{2}$.
Using Legendre's duplication formula for the $\Gamma$-function
$$
\Gamma\Big(\frac{n+1}{2}\Big)\Gamma\Big(\frac{n+2}{2}\Big) = 2^{-n}\cdot\sqrt{\pi}\cdot\Gamma(n+1)
$$
we obtain for the dimension-dependent coefficient
\begin{align*}
\frac{{n-1 \choose \frac{n-1}{2}}}{(4\pi)^{n/2}\cdot\Gamma(\frac{n}{2}+1)} 
=&\,\,
\frac{{n-1 \choose \frac{n-1}{2}}\cdot\Gamma(\frac{n+1}{2})}{\pi^{(n+1)/2}\cdot\Gamma(n+1)} \\
=&\,\,
\frac{\frac{(n-1)!}{(\frac{n-1}{2}!)^2}\cdot\frac{n-1}{2}!}{\pi^{(n+1)/2}\cdot n!} \\
=&\,\,
\frac{1}{\pi^{(n+1)/2}\cdot n\cdot \frac{n-1}{2}!}.
\end{align*}
This shows
$$
2\kappa_\pm 
=
\kappa_++\kappa_-
=
\frac{\vol(M)}{\pi^{(n+1)/2}\cdot n\cdot \frac{n-1}{2}!}
$$
and concludes the proof.
\end{proof}

\begin{remark}
For low dimensions $n$ one can compute the coefficient of the leading term in the Weyl expansion directly from \eqref{eq:kappapm}.
For $n=1$ we have $\pi(\xi)=\id$ and the principal symbol of $\curl$ at $\xi$ is multiplication with $\pm|\xi|$ where the sign depends on the orientation of $\xi$.
Thus $\hat\pi_\pm(\xi)=\id$ if $0<|\xi|\leq1$ and $\xi$ is correctly oriented and $\hat\pi_\pm(\xi)=0$ otherwise.
Hence, for fixed $x$ the integral over $T^*_xM$ gives $1$.
Therefore $\kappa_\pm= \frac{\vol(M)}{2\pi}$ which coincides with the coefficient in Theorem~\ref{thm:Weyl}.

For $n=3$, $\pi(\xi)$ is the orthoprojection onto the orthogonal complement of $\xi$ in $T^*_xM$.
The principal symbol of $\curl$ is $i|\xi|$ times a rotation in the plane $\xi^\perp$ and hence has the eigenvalues $|\xi|$ and $-|\xi|$.
Thus $\tr (\hat\pi_{\pm}(\xi)\pi(\xi))=1$ if $|\xi|\leq 1$ and vanishes otherwise.
Therefore the integral over $T^*M$ coincides with the volume of the unit ball.
Hence
$$
\kappa_\pm
=
(2\pi)^{-3}\cdot\frac{4\pi}{3}\cdot\vol(M)
=
\frac{\vol(M)}{6\pi^2},
$$
again in accordance with Theorem~\ref{thm:Weyl}.
This is also consistent with the formulas obtained in \cite{Weyl,Saf} for domains in $\R^3$.
\end{remark}

\subsection{The ${\zeta}$-function}
We define the $\zeta$-function of $\curl$ by
\begin{align*}
\zeta(s) &= 
\sum_{\lambda\neq 0} m(\curl,\lambda) \cdot|\lambda|^{-s}.
\end{align*}

\begin{theorem}\label{thm:zeta0}
The $\zeta$-function converges and is holomorphic for $\mathrm{Re}(s)>n$ and has a meromorphic continuation to $\C$.
The poles are simple and can occur only at $s=n,\, n-2,\ldots,1$.
Moreover,
$$
\zeta(0) = (-1)^{\frac{n+1}{2}}\sum_{p=0}^{\frac{n-1}{2}}(-1)^p\, b_p(M)
$$
where $b_p(M)$ denotes the $p^\mathrm{th}$ Betti number of $M$.
\end{theorem}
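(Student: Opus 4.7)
The plan is to reduce $\zeta(s)$ to a finite alternating sum of zeta functions of Hodge--Laplacians, and then use subordination to read off the pole structure.

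\emph{Step 1 (reduction).} Pairing eigenvalues of opposite sign and applying \eqref{eq:mlambda} termwise with $\mu = \lambda^2$, one obtains
$$
\zeta(s) \;=\; (-1)^{(n-1)/2}\sum_{p=0}^{(n-1)/2}(-1)^p\,\zeta_{\Delta_p}(s/2),
$$
where $\zeta_{\Delta_p}(s) := \sum_{\mu>0}m(\Delta|_{\OL{p}},\mu)\,\mu^{-s}$ is the zeta function of the Hodge--Laplacian on $p$-forms (kernel excluded). Absolute convergence on $\{\mathrm{Re}(s) > n\}$ follows from Theorem~\ref{thm:Weyl}.

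\emph{Step 2 (meromorphic continuation and pole structure).} I would use the Mellin representation
$$
\zeta(s) \;=\; \frac{1}{\Gamma(s)}\int_0^\infty t^{s-1}\bigl(\tr(e^{-t|\curl|}) - \dim\ker(\curl)\bigr)\,dt
$$
combined with the classical heat expansion $\tr(e^{-u\curl^2}) \sim \sum_{k\geq 0}c_k\,u^{k-n/2}$ as $u\to 0^+$ (which applies since $\curl^2 = \Delta$ on coexact forms) and the subordination identity $e^{-t|\curl|} = \int_0^\infty \frac{t}{2\sqrt{\pi}\,u^{3/2}}e^{-t^2/(4u)}\,e^{-u\curl^2}\,du$. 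Term-by-term integration shows that the divergent part of $\tr(e^{-t|\curl|})$ as $t\to 0^+$ consists exactly of the odd negative powers $t^{-n},t^{-n+2},\ldots,t^{-1}$ (the indices $k = 0,1,\ldots,(n-1)/2$), while the remaining indices produce only nonnegative integer powers of $t$. The Mellin integral then has simple poles at $s = n,n-2,\ldots,1$ (from the divergent part), and the $\Gamma(s)^{-1}$ factor kills every would-be pole at $s \leq 0$ coming from the regular part.

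\emph{Step 3 (value at the origin).} Since $0 \notin \{n,n-2,\ldots,1\}$ for $n \geq 3$, $\zeta$ is regular at $s = 0$, and Step~1 gives $\zeta(0) = (-1)^{(n-1)/2}\sum_p(-1)^p\,\zeta_{\Delta_p}(0)$. For odd $n$ the heat expansion $\tr(e^{-t\Delta|_{\OL{p}}}) \sim (4\pi t)^{-n/2}\sum_k a_k^{(p)}\,t^k$ contains no $t^0$-term, so the only $s = 0$ pole in the Mellin representation of $\zeta_{\Delta_p}$ comes from subtracting $b_p(M) = \dim\ker(\Delta|_{\OL{p}})$; the simple zero of $\Gamma(s)^{-1}$ at $s = 0$ converts it into $\zeta_{\Delta_p}(0) = -b_p(M)$. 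Substituting,
$$
\zeta(0) \;=\; (-1)^{(n-1)/2}\sum_{p=0}^{(n-1)/2}(-1)^p\bigl(-b_p(M)\bigr) \;=\; (-1)^{(n+1)/2}\sum_{p=0}^{(n-1)/2}(-1)^p\,b_p(M).
$$

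The main obstacle is making the subordination analysis of Step~2 rigorous: one must verify that the divergent part of $\tr(e^{-t|\curl|})$ consists only of the odd negative powers $t^{2k-n}$ for $k = 0,\ldots,(n-1)/2$ and, in particular, that no logarithmic terms arise (which would produce spurious simple poles at non-positive integers via the $\Gamma(s)^{-1}$ factor).
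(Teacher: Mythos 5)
Your Steps 1 and 3 coincide with the paper's proof: it establishes $\zeta(s)=(-1)^{(n-1)/2}\sum_{p=0}^{(n-1)/2}(-1)^p\zeta(p,s/2)$ from \eqref{eq:mlambda} and then quotes the standard meromorphic theory of the Hodge--Laplacian zeta functions, including $\zeta(p,0)=-b_p(M)$ for odd $n$ (no $t^0$-term in the heat expansion). So the convergence statement, the regularity at $s=0$, and the formula for $\zeta(0)$ are in order. (Minor point: in your Mellin formula $\dim\ker(\curl)=\infty$; the trace must be taken on $(\ker\curl)^\perp$ from the start.)

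The genuine gap is in Step 2, and the obstacle you flag at the end is not a removable technicality. The term-by-term integral $\frac{t}{2\sqrt{\pi}}\int_0^\infty u^{k-\frac{n}{2}-\frac32}e^{-t^2/4u}\,du$ converges only for $k\le\frac{n-1}{2}$; for $k\ge\frac{n+1}{2}$ the integrand fails to be integrable at $u=\infty$, and after splitting the subordination integral at $u=1$ the index $k=\frac{n+1}{2}$ contributes $-\frac{c_k}{\sqrt{\pi}}\,t\log t$ (higher $k$ contribute $t^{2k-n}\log t$), where $c_k$ is the $k$-th coefficient of the expansion $\sum_{\mu>0}m(\Delta|_{d^\dagger\OD{(n+1)/2}},\mu)e^{-u\mu}\sim\sum_k c_k u^{k-n/2}$, i.e.\ $c_k=(-1)^{(n-1)/2}\sum_p(-1)^p a_k(p)$ in terms of the heat coefficients of the $\Delta_p$. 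Each such logarithm survives the $\Gamma(s)^{-1}$ factor and yields a genuine simple pole of $\zeta$ at $s=n-2k\in\{-1,-3,\ldots\}$. Thus the assertion that the remaining indices ``produce only nonnegative integer powers of $t$'' is exactly what must be proved, it amounts to $c_k=0$ for all $k\ge\frac{n+1}{2}$, and it fails in general: on $S^1\times S^2$ (circumference $L$, unit round sphere) the heat trace on coexact $1$-forms equals $2h(t)\bigl(h'(t)-1\bigr)$ with $h(t)=\frac{L}{\sqrt{4\pi t}}+\OO(t^\infty)$ and $h'(t)=t^{-1}+\frac13+\frac{t}{15}+\cdots$ the scalar heat traces of the factors, whose $t^{1/2}$-coefficient $\frac{2L}{15\sqrt{4\pi}}$ is nonzero, producing a pole at $s=-1$. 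The paper's route does not meet this issue head-on because it transfers the poles of $\zeta(p,\cdot)$ at the half-integers $\frac{n}{2}-k$ via $s\mapsto s/2$; note, however, that this transfer likewise places candidate simple poles at every $s=n-2k$ with $k\ge0$, so the conclusion that is safely available from either argument is that the poles are simple and lie in $\{n-2k:k\ge0\}$ --- the further restriction to $k\le\frac{n-1}{2}$ needs the cancellation above. None of this affects the claims about convergence, holomorphy at $s=0$, or the value $\zeta(0)$.
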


\begin{proof}
The $\zeta$-function of $\curl$ relates to the $\zeta$-functions of the Hodge-Laplacians
$$
\zeta(p,s) = \sum_{\lambda>0} m(\Delta|_{\OD{p}},\lambda)\cdot\lambda^{-s}.
$$
Namely, by \eqref{eq:mlambda} we find
$$
\zeta(s) 
= 
(-1)^{\frac{n-1}{2}}\sum_{p=0}^{\frac{n-1}{2}}(-1)^p\, \zeta(p,s/2)\, .
$$
The assertions about convergence, meromorphic continuation and the poles now follow directly from the corresponding statements for $\zeta(p,s)$, see e.g.\ \cite[Thm.~5.2]{Rosenberg}.
Moreover, again by \cite[Thm.~5.2]{Rosenberg} and by Hodge theory, we find
\begin{align*}
\zeta(0)
&=
(-1)^{\frac{n-1}{2}}\sum_{p=0}^{\frac{n-1}{2}}(-1)^p\, \zeta(p,0) \\
&=
(-1)^{\frac{n+1}{2}}\sum_{p=0}^{\frac{n-1}{2}}(-1)^p\, \dim\ker(\Delta|_{\OD{p}})\\
&=
(-1)^{\frac{n+1}{2}}\sum_{p=0}^{\frac{n-1}{2}}(-1)^p\, b_p(M).
\qedhere
\end{align*}
\end{proof}

In particular, the value $\zeta(0)$ is a topological invariant of $M$.
When taken modulo $2$ it is known as the \emph{semi-characteristic} of $M$ \cite{Ker}.

\subsection{The ${\eta}$-invariant}
An interesting modification of the $\zeta$-function is the $\eta$-function given by
$$
\eta(s) = \sum_{\lambda>0} \big(m(\curl,\lambda)-m(\curl,-\lambda) \big)\cdot\lambda^{-s}.
$$
Millson showed in \cite{Mill} that the $\eta$-invariant $\eta(0)$ coincides with the $\eta$-invariant of the signature operator acting on forms of even degree.
This $\eta$-invariant occurs as a boundary contribution in the signature formula for manifolds with boundary due to Atiyah, Singer, and Patodi \cite[Thm.~4.14]{APS}.

\section*{V. Eigenvalue estimates}
\label{sec:5}
\stepcounter{section}

In section~\ref{sec:Examples} we will compute the spectrum of $\curl$ on some particularly nice spaces.
In general, an explicit computation is not possible. 
But often one can at least give bounds on the spectrum.

To formulate an estimate which is valid in all odd dimensions consider the \emph{curvature operator} $\K$, a field of symmetric endomorphisms of  $\Lambda^2TM$.
It is characterized by 
$$
\<\K(X\wedge Y),U\wedge V\> =  \<R(X,Y)V,U\>
$$
for all $X,Y,U,V\in T_xM$ and all $x\in M$.
The manifold $M$ has constant sectional curvature $\kappa$ if and only if $\K=\kappa\cdot\id$.

\begin{theorem}\label{thm:AllDest}
Let $M$ be an oriented closed Riemannian manifold of odd dimension $n\ge3$.
Let $\kappa$ be a positive constant and assume $\K\ge\kappa\cdot\id$.
Then, all nonzero eigenvalues $\lambda$ of $\curl$ satisfy
$$
|\lambda| \ge \frac{n+1}{2}\sqrt{\kappa}.
$$
\end{theorem}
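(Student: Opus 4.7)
My plan is to combine a standard Bochner--Weitzenb\"ock identity on $p$-forms with a pointwise refined Kato inequality tailored to coclosed forms. Set $p=(n-1)/2$. By Lemma~\ref{lemma:curlDeltaEigen}, any eigenform $\omega\in\OC{p}$ of $\curl$ with eigenvalue $\lambda\neq 0$ is coexact, in particular $d^\dagger\omega=0$, and satisfies $\Delta\omega=\lambda^2\omega$. Hence $\omega$ is a smooth coclosed eigenform of the Hodge Laplacian on $p$-forms, and $\|d\omega\|^2=(\Delta\omega,\omega)=\lambda^2\|\omega\|^2$.

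The first step is the Weitzenb\"ock formula $\Delta\omega=\nabla^*\nabla\omega + R_p\omega$ together with the classical pointwise estimate of Gallot--Meyer: when $\K\ge\kappa\cdot\id$ with $\kappa>0$, the curvature endomorphism on $p$-forms satisfies $R_p\ge p(n-p)\kappa\cdot\id$. Pairing with $\omega$ and integrating yields
\[
\lambda^2\|\omega\|^2 \;\ge\; \|\nabla\omega\|^2 + p(n-p)\kappa\,\|\omega\|^2.
\]

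The second step provides the needed lower bound on $\|\nabla\omega\|^2$. The covariant derivative $\nabla\omega\in T^*M\otimes\Lambda^pT^*M$ decomposes orthogonally at each point into three $\mathrm{O}(n)$-irreducible components: an alternating part (recovering $d\omega$ up to a factor), an interior-product part (recovering $d^\dagger\omega$ up to a factor), and a traceless twistor part. Accounting for the normalizations one has the pointwise identity $|\nabla\omega|^2=\frac{1}{p+1}|d\omega|^2+\frac{1}{n-p+1}|d^\dagger\omega|^2+|T\omega|^2$, which for coclosed $\omega$ reduces to the refined Kato inequality $|\nabla\omega|^2\ge \frac{1}{p+1}|d\omega|^2$. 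Integrating and using $\|d\omega\|^2=\lambda^2\|\omega\|^2$ gives $\|\nabla\omega\|^2\ge \frac{\lambda^2}{p+1}\|\omega\|^2$.

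Substituting this into the Weitzenb\"ock estimate yields $\lambda^2\bigl(1-\frac{1}{p+1}\bigr)\ge p(n-p)\kappa$, which rearranges to $\lambda^2\ge (p+1)(n-p)\kappa=\frac{(n+1)^2}{4}\kappa$, the desired bound. The Weitzenb\"ock formula and the Gallot--Meyer curvature bound are standard; I expect the main obstacle to be cleanly setting up the refined Kato inequality with the sharp constant $\frac{1}{p+1}$, since feeding Gallot--Meyer directly into Weitzenb\"ock only produces $\lambda^2\ge\frac{n^2-1}{4}\kappa$, and it is genuinely the coclosedness of $\omega$ (through the twistor decomposition) that furnishes the additional factor $(p+1)/p$. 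In dimension three one has $p=1$, so $R_p=\Ric$ and the hypothesis reduces to $\Ric\ge 2\kappa\,g$, matching the relaxation advertised after the statement.
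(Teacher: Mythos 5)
Your proof is correct, but it takes a genuinely different route from the paper. The paper's own argument is a two-line reduction: it invokes Gallot--Meyer's eigenvalue estimate (\cite[Thm.~6.13]{GaMe}), which states that under $\K\ge\kappa\cdot\id$ every eigenvalue $\mu$ of the Hodge--Laplacian on coexact $\frac{n-1}{2}$-forms satisfies $\mu\ge\bigl(\frac{n+1}{2}\bigr)^2\kappa$, and then applies Lemma~\ref{lemma:curlDeltaEigen} exactly as you do. What you have done is to unpack that cited theorem: you keep only the \emph{algebraic} ingredient from Gallot--Meyer (the pointwise bound $R_p\ge p(n-p)\kappa$ on the Weitzenb\"ock curvature term) and supply the analytic step yourself via the orthogonal decomposition of $\nabla\omega$ into its $d$-part, $d^\dagger$-part and twistor part. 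Your normalizations check out: the projections onto the first two components have squared norms $\frac{1}{p+1}|d\omega|^2$ and $\frac{1}{n-p+1}|d^\dagger\omega|^2$ and are mutually orthogonal, so the refined Kato inequality for coclosed forms and the resulting arithmetic $\lambda^2\frac{p}{p+1}\ge p(n-p)\kappa$, i.e.\ $\lambda^2\ge(p+1)(n-p)\kappa=\frac{(n+1)^2}{4}\kappa$, are all valid. What your approach buys is transparency about where sharpness comes from (the vanishing of the twistor part in the equality case, consistent with the Killing forms on the round sphere) and, as you note, a unified treatment of Theorem~\ref{thm:3Dest}: for $n=3$, $p=1$ the Weitzenb\"ock term is exactly $\Ric$, so your argument needs only $\Ric\ge2\kappa\cdot\id$ and recovers the bound $|\lambda|\ge2\sqrt{\kappa}$ that the paper proves separately by a modified-connection computation. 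The price is that you still rely on the nontrivial Gallot--Meyer curvature estimate $R_p\ge p(n-p)\kappa$ as a black box, so the argument is not more elementary than the paper's, merely more explicit.
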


\begin{proof}
By \cite[Thm.~6.13]{GaMe} all eigenvalues $\mu$ of the Hodge-Laplacian on coexact $\frac{n-1}{2}$-forms satisfy
$$
\mu \ge \Big(\frac{n+1}{2}\Big)^2\kappa.
$$
Lemma~\ref{lemma:curlDeltaEigen} yields the claim.
\end{proof}

The estimate is sharp because equality holds for the standard sphere, see Theorem~\ref{thm:roundsphere} below.
Unfortunately, positivity of the curvature operator is a very strong assumption.
In dimension $3$ we now replace it by a weaker Ricci curvature bound.
The conclusion remains the same.

\begin{theorem}\label{thm:3Dest}
Let $M$ be an oriented closed $3$-dimensional Riemannian manifold.
Let $\kappa$ be a positive constant and assume $\Ric\ge2\kappa\cdot\id$.
Then, all nonzero eigenvalues $\lambda$ of $\curl$ satisfy
$$
|\lambda| \ge 2\sqrt{\kappa}.
$$
\end{theorem}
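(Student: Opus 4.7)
The plan is to combine the Weitzenb\"ock formula for $1$-forms with a pointwise Kato-type inequality, exploiting the fact that $\omega$ is an eigenform of $\curl$ itself (not merely a coexact eigenform of $\Delta$). This extra pointwise information compensates for the weakened hypothesis: a Ricci bound alone is not enough to invoke \cite[Thm.~6.13]{GaMe} as in Theorem~\ref{thm:AllDest}.

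First, let $\lambda\ne 0$ be an eigenvalue of $\curl$ and $\omega$ a corresponding eigenform; smoothness follows from the corollary after Lemma~\ref{lemma:curlDeltaEigen}. Since $n=3\equiv 3$ mod $4$, the relation $\curl\omega=\lambda\omega$ reads $\ast d\omega=\lambda\omega$, equivalently $d\omega=\lambda\ast\omega$. Two consequences are central: the \emph{pointwise} identity $|d\omega|^2=\lambda^2|\omega|^2$, and, by applying $d$ once more, $d\ast\omega=0$, hence $d^\dagger\omega=0$ and $\Delta\omega=\lambda^2\omega$.

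Next, the Weitzenb\"ock identity $\Delta\omega=\nabla^\ast\nabla\omega+\Ric(\omega)$ on $1$-forms, paired with $\omega$ in $L^2$ and combined with $\Ric\ge 2\kappa\cdot\id$, yields
$$
\lambda^2\|\omega\|^2 \;=\; \|\nabla\omega\|^2+\int_M\Ric(\omega,\omega)\,\dV \;\ge\; \|\nabla\omega\|^2+2\kappa\,\|\omega\|^2.
$$
The decisive pointwise ingredient is the splitting $\nabla\omega=S+A$ into symmetric and antisymmetric parts, where $A_{ij}=\tfrac12(d\omega)_{ij}$ and therefore $|A|^2=\tfrac12|d\omega|^2$. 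Thus
$$
|\nabla\omega|^2 \;\ge\; |A|^2 \;=\; \tfrac{1}{2}|d\omega|^2 \;=\; \tfrac{\lambda^2}{2}|\omega|^2
$$
pointwise. Integrating and substituting back into the Weitzenb\"ock identity gives $\tfrac12\lambda^2\|\omega\|^2\le\lambda^2\|\omega\|^2-2\kappa\|\omega\|^2$, hence $\lambda^2\ge 4\kappa$.

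The main obstacle is that the factor-of-two improvement over a naive Lichnerowicz-type argument depends on the \emph{pointwise} identity $|d\omega|^2=\lambda^2|\omega|^2$: its merely integrated version holds for any coexact Hodge eigenform with eigenvalue $\lambda^2$ and would, combined with Bochner, produce only $\lambda^2\ge 2\kappa$. The strengthening to $\lambda^2\ge 4\kappa$ is thus intrinsically tied to the first-order, $\curl$-eigenform structure, which forces $d\omega$ to be a pointwise scalar multiple of $\ast\omega$. Apart from tracking sign conventions in the Weitzenb\"ock formula and in $\ast^2$ on $1$- and $2$-forms in dimension three, the argument is routine.
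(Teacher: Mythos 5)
Your proof is correct, and it takes a genuinely different route from the paper. The paper introduces the modified metric connection $\nh_X\omega = \nabla_X\omega + \sqrt{\kappa}\,*(X^\flat\wedge\omega)$, derives the Weitzenb\"ock-type identity $\Delta - 2\sqrt{\kappa}\curl = \nh^*\nh + \Ric - 2\kappa$, and concludes from $\|\nh\omega\|^2\ge 0$ that $\lambda(\lambda-2\sqrt{\kappa})\ge 0$. You instead use the unmodified Bochner formula and exploit that the eigenvalue equation $d\omega=\lambda*\omega$ pins down the antisymmetric part $A$ of $\nabla\omega$ pointwise, so that $\|\nabla\omega\|^2\ge\|A\|^2=\tfrac12\|d\omega\|^2=\tfrac{\lambda^2}{2}\|\omega\|^2$; your normalization $|A|^2=\tfrac12|d\omega|^2$ is the correct one for the conventions in which the Bochner formula carries the full tensor norm of $\nabla\omega$ and the Hodge star is a pointwise isometry. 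In fact your lower bound on $\|\nabla\omega\|^2$ dominates the one implicit in the paper, since $\tfrac{\lambda^2}{2}-(2\sqrt{\kappa}\lambda-2\kappa)=\tfrac12(\lambda-2\sqrt{\kappa})^2\ge 0$; both are sharp on $S^3$, where the symmetric part of $\nabla\omega$ vanishes (the eigenform is Killing) and equivalently $\nh\omega=0$. What the paper's formulation buys is the notion of $\nh$-parallel eigenforms, which is then reused verbatim in the rigidity statement (Theorem~\ref{thm:Rigidity3D}) and in the space-form computations of Section~\ref{sec:Examples}; your equality case instead yields a Killing $1$-form, which is equivalent for an eigenform with $|\lambda|=2\sqrt{\kappa}$ but would require a short extra argument to feed into the rigidity proof. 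Your approach has the merit of being more elementary and of handling both signs of $\lambda$ at once, whereas the paper treats negative eigenvalues by switching to $\check\nabla$ or reversing orientation.
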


Again, the estimate is optimal because equality is attained on the round $S^3$.

\begin{proof}
We introduce an auxiliary connection on $T^*M$ by
$$
\nh_X\omega := \nabla_X\omega +\sqrt{\kappa} *(X^\flat \wedge\omega).
$$
Here $X^\flat$ denotes the covector corresponding to $X$ under the ``musical isomorphism'', i.e., $X^\flat(Y)=\<X,Y\>$ for all vectors $Y$.
This defines a metric connection $\nh$ because the term we have added is skewsymmetric in $\omega$.

We compute the connection-Laplacian for $\nh$.
We fix a point $x$ in $M$ and choose a local orthonormal tangent frame $e_1,e_2,e_3$ near $x$ which is synchronous at $x$, i.e., $\nabla_\cdot e_j=0$ at $x$.
Then, we find at $x$:
\begin{align*}
\nh^*\nh\omega
&=
-\sum_{j=1}^3 \nh_{e_j}\nh_{e_j}\omega\\
&=
-\sum_{j=1}^3 \big(\nabla_{e_j}\nabla_{e_j}\omega +2\sqrt{\kappa}*(e_j^\flat\wedge\nabla_{e_j}\omega)+ \kappa*(e_j^\flat\wedge *(e_j^\flat\wedge\omega)) \big)\\
&=
\nabla^*\nabla\omega - 2\sqrt{\kappa}\curl\omega +2\kappa\omega.
\end{align*}
Inserting the Bochner formula
$$
\Delta = \nabla^*\nabla + \Ric
$$
yields
\begin{equation}
\Delta-2\sqrt{\kappa}\curl = \nh^*\nh + \Ric - 2\kappa.
\label{eq:Weitzen}
\end{equation}
Now let $\lambda>0$ be an eigenvalue of $\curl$ with corresponding eigenform $\omega$.
Inserting $\omega$ into \eqref{eq:Weitzen} and taking the $L^2$-scalar product with $\omega$ yields
\begin{align*}
(\lambda^2 - 2\sqrt{\kappa}\lambda)\|\omega\|^2
&=
\|\nh\omega\|^2 + (\Ric(\omega),\omega) -2\kappa\|\omega\|^2\\
&\ge
0+2\kappa\|\omega\|^2-2\kappa\|\omega\|^2\\
&= 0.
\end{align*}
Hence, $(\lambda-2\sqrt{\kappa})\lambda\ge0$ and, since $\lambda>0$, we conclude $\lambda\ge2\sqrt{\kappa}$.

For a negative eigenvalue $\lambda$ we can obtain the estimate using the connection $\check\nabla_X\omega := \nabla_X\omega -\sqrt{\kappa} *(X^\flat \wedge\omega)$ or, alternatively, we reduce to the case of positive $\lambda$ by reversing the orientiation.
\end{proof}

\begin{remark}
It is possible to deduce Theorem~\ref{thm:AllDest} in a similar fashion using the modified connection
$$
\nh_X\omega := \nabla_X\omega +\alpha\sqrt{\kappa} *(X^\flat \wedge\omega)
$$
where the optimal value of 
$$
\alpha\in
\begin{cases}
\R, & \mbox{ if }n\equiv3 \mbox{ mod }4,\\
i\R, & \mbox{ if }n\equiv1 \mbox{ mod }4,
\end{cases}
$$
depends on the dimension.
\end{remark}

It is interesting to compare the estimate in Theorem~\ref{thm:3Dest} to Lichnerowicz' lower bound (see e.g.\ \cite[p.~82]{Chavel}) for the first eigenvalue $\mu$ of the Laplacian acting on functions (under the same Ricci curvature assumption):
\begin{equation}
\mu \ge 3\kappa.
\label{eq:Lichnerowicz}
\end{equation}
If equality holds in \eqref{eq:Lichnerowicz} then Obata's theorem tells us that $M$ is isometric to a round sphere.
We have a similar rigidity statement for Theorem~\ref{thm:3Dest} as well.
On the round $3$-sphere the multiplicity of the eigenvalue $\lambda=2$ is $3$.
Conversely, we can now show:

\begin{theorem}\label{thm:Rigidity3D}
Let $M$ be an oriented closed and connected $3$-dimensional Riemannian manifold.
Let $\kappa$ be a positive constant and assume $\Ric\ge2\kappa\cdot\id$.
Assume that $\lambda = 2\sqrt{\kappa}$ or $\lambda = -2\sqrt{\kappa}$ is an eigenvalue of $\curl$ of multiplicity at least $2$.

Then, $M$ has constant sectional curvature $\kappa$ and is hence a spherical spaceform.
Moreover, if both $2\sqrt{\kappa}$ and $-2\sqrt{\kappa}$ are $\curl$-eigenvalues of multiplicity $2$ at least, then $M$ is isometric to $S^3$ or to $\mathbb{RP}^3$ equipped with a metric of constant sectional curvature $\kappa$.
\end{theorem}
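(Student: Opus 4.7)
The overall plan is: extract $\hat\nabla$-parallel eigenforms from equality in Theorem~\ref{thm:3Dest}; use two of them to force $\hat\nabla$ to be flat; turn flatness of $\hat\nabla$ into constant sectional curvature $\kappa$; and finally use the $\curl$-spectrum on spherical space forms computed in Section~\ref{sec:Examples} to pin down~$\Gamma$. Without loss of generality I treat $\lambda=2\sqrt{\kappa}$; the case $\lambda=-2\sqrt{\kappa}$ runs in parallel using the conjugate connection $\check\nabla_X\omega = \nabla_X\omega - \sqrt{\kappa}\ast(X^\flat\wedge\omega)$ introduced at the end of the proof of Theorem~\ref{thm:3Dest}.

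\emph{Equality case and parallel coframe.} For any eigenform $\omega$ with $\curl\omega=2\sqrt{\kappa}\omega$, the computation in the proof of Theorem~\ref{thm:3Dest} reads
\[
0 \;=\; (\lambda^{2}-2\sqrt{\kappa}\lambda)\|\omega\|^{2} \;=\; \|\hat\nabla\omega\|^{2} + \int_{M}\bigl(\langle\Ric(\omega),\omega\rangle - 2\kappa|\omega|^{2}\bigr)\dV,
\]
a sum of two non-negative quantities; both must therefore vanish. Hence $\hat\nabla\omega\equiv 0$, and because $\Ric - 2\kappa\cdot\id$ is positive semidefinite with $\omega$ in the kernel of its quadratic form, one deduces the pointwise identity $\Ric(\omega) = 2\kappa\,\omega$. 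Since $\hat\nabla$ is metric, $\hat\nabla$-parallel $1$-forms have constant norm, so nonzero eigenforms vanish nowhere. Gram--Schmidt applied to two linearly independent eigenforms yields an orthonormal pair $\omega_{1},\omega_{2}$ of $\hat\nabla$-parallel $1$-forms. The difference tensor $A_{X}\omega = \sqrt{\kappa}\ast(X^{\flat}\wedge\omega)$ is skewsymmetric on $T^{*}M$ and hence traceless, so $\hat\nabla$ also preserves the volume form and therefore the Hodge star; consequently $\omega_{3}:=\ast(\omega_{1}\wedge\omega_{2})$ is another $\hat\nabla$-parallel $1$-form, completing a global $\hat\nabla$-parallel orthonormal coframe. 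Hence $\hat\nabla$ is flat.

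\emph{From flat $\hat\nabla$ to constant sectional curvature.} Because $A$ is $\nabla$-parallel (the Hodge star and the musical isomorphism are), the two curvatures compare cleanly as
\[
R^{\hat\nabla}(X,Y) \;=\; R^{\nabla}(X,Y) + [A_{X},A_{Y}],
\]
with no mixed terms. A direct cross-product calculation in three dimensions gives
\[
[A_{X},A_{Y}]\omega \;=\; \kappa\bigl(\omega(X)Y^{\flat} - \omega(Y)X^{\flat}\bigr),
\]
which is exactly $-R^{\nabla}(X,Y)\omega$ in the model of constant sectional curvature $\kappa$. So $R^{\hat\nabla}=0$ identifies $M$ with a spherical space form $S^{3}/\Gamma$ of curvature~$\kappa$.

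\emph{The sharp quotient, and main obstacle.} For the refined assertion I would feed the two-sided multiplicity hypothesis into the spectral computation of Section~\ref{sec:Examples}: on $S^{3}$ each of the eigenspaces for $\pm 2\sqrt{\kappa}$ is $3$-dimensional and naturally carries the standard $\mathrm{SO}(3)$-representation of one of the two $\mathrm{Sp}(1)$-factors in the double cover $\mathrm{Sp}(1)\times\mathrm{Sp}(1)\to\SOv$. Descending to $S^{3}/\Gamma$ retains only the $\Gamma$-invariants, and since a nontrivial element of $\mathrm{SO}(3)$ fixes only a line, the hypothesis of multiplicity $\geq 2$ on \emph{both} sides forces each of the two $\mathrm{Sp}(1)$-projections of $\Gamma$ to land in $\{\pm 1\}$; equivalently $\Gamma\subseteq\{\pm I\}\subset\SOv$, so $M$ is isometric to $S^{3}$ or to $\mathbb{RP}^{3}$. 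This last step is the main obstacle: while the Weitzenb\"ock/parallel-frame mechanism of the previous paragraphs is a clean equality-case analysis that needs nothing beyond Theorem~\ref{thm:3Dest} and standard connection calculus, the sharp identification of $\Gamma$ rests essentially on the explicit representation-theoretic description of the eigenspaces developed only in Section~\ref{sec:Examples}.
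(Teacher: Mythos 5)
Your proposal is correct and follows essentially the same route as the paper: equality in the Weitzenb\"ock estimate of Theorem~\ref{thm:3Dest} yields $\hat\nabla$-parallel eigenforms, two of which together with $\ast(\omega_1\wedge\omega_2)$ give a parallel orthonormal coframe forcing constant curvature $\kappa$, and the identification of $\Gamma$ is then deferred to the $\SOv$-module structure of the eigenspaces from Section~\ref{sec:Examples} (Lemma~\ref{lem:Lambda+-} and Corollary~\ref{cor:SphericalKleinst}), exactly as in the paper. Your only deviations are cosmetic: you deduce constant curvature from the comparison $R^{\hat\nabla}=R^{\nabla}+[A_X,A_Y]$ rather than the paper's direct computation of $R(V_i,V_j)V_k$ in the parallel frame, and you rule out nontrivial $\Gamma$ via the fixed-line argument in $\mathrm{SO}(3)$ rather than the character formula, both of which are sound.
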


\begin{proof}
By reversing the orientation if necessary we can assume that $\lambda$ is positive.
By rescaling the metric we may furthermore assume that $\kappa=1$.

Thus let $\lambda = 2$ be an eigenvalue of $\curl$ of multiplicity at least $2$.
Every eigenform $\omega$ of $\curl$ to the eigenvalue $\lambda$ must be parallel with respect to the connection $\nh_X\omega = \nabla_X\omega + *(X^\flat \wedge\omega)$, see the proof of Theorem~\ref{thm:3Dest}.
Since the connection $\nh$ is metric we can choose the $\omega_j$ such that they are perpendicular and of length $1$ at each point.
One easily checks that $\omega_3:=*(\omega_1\wedge\omega_2)$ is also $\nh$-parallel and complements $\omega_1$ and $\omega_2$ to an orthonormal basis at each point.
Thus the cotangent bundle $T^*M$ is trivialized by the $\nh$-parallel forms $\omega_1$, $\omega_2$ and $\omega_3$.

Let $V_j=\omega_j^\sharp$ be the corresponding vector fields.
Without loss of generality we assume that $V_1,V_2,V_3$ is positively oriented.
Since the $\omega_j$ are $\nh$-parallel we have
$$
\nabla_{V_i}\omega_j 
= -*(\omega_i\wedge\omega_j)
=
\begin{cases}
0,& \mbox{ if }i=j,\\
-\omega_k, & \mbox{ if }\{i,j,k\}=\{1,2,3\}\mbox{ and }(i,j,k)\mbox{ is even},\\
\omega_k, & \mbox{ if }\{i,j,k\}=\{1,2,3\}\mbox{ and }(i,j,k)\mbox{ is odd}.
\end{cases}
$$
This implies 
$$
\nabla_{V_i}V_j 
=
\begin{cases}
0,& \mbox{ if }i=j,\\
-V_k, & \mbox{ if }\{i,j,k\}=\{1,2,3\}\mbox{ and }(i,j,k)\mbox{ is even},\\
V_k, & \mbox{ if }\{i,j,k\}=\{1,2,3\}\mbox{ and }(i,j,k)\mbox{ is odd}.
\end{cases}
$$
Hence, if $i,j,k$ are pairwise disjoint we get 
\begin{align*}
R(V_i,V_j)V_k
&=
\nabla_{V_i}\nabla_{V_j}V_k - \nabla_{V_j}\nabla_{V_i}V_k - \nabla_{\nabla_{V_i}V_j}V_k + \nabla_{\nabla_{V_j}V_i}V_k\\
&=
\pm \nabla_{V_k}V_k \pm \nabla_{V_k}V_k \pm \nabla_{V_k}V_k\pm \nabla_{V_k}V_k\\
&=0.
\end{align*}
If the permutation $(i,j,k)$ is even we find
\begin{align*}
R(V_i,V_j)V_j
&=
\nabla_{V_i}\nabla_{V_j}V_j - \nabla_{V_j}\nabla_{V_i}V_j - \nabla_{\nabla_{V_i}V_j}V_j + \nabla_{\nabla_{V_j}V_i}V_j\\
&=
0+\nabla_{V_j}V_k+\nabla_{V_k}V_j+\nabla_{V_k}V_j\\
&=
V_i
\end{align*}
and the same result also holds if $(i,j,k)$ is odd.
This determines the full curvature tensor which must then be given by
$$
R(X,Y)Z = \<Y,Z\>X-\<X,Z\>Y.
$$
Thus $M$ has constant sectional curvature $1$.

Now assume that both $2$ and $-2$ are $\curl$-eigenvalues of multiplicity $2$ at least.
Then, as we have seen above, they actually have multiplicity $3$.
The assertion will be shown right after Corollary~\ref{cor:SphericalKleinst} below.
\end{proof}

\begin{remark}
Theorems~\ref{thm:3Dest} and the first part of \ref{thm:Rigidity3D} can also be derived from Theorem~7.6 in \cite{CT94}.
Namely, by this result eigen-1-forms to the eigenvalues $\pm2\sqrt{\kappa}$ are dual to Killing vector fields which are pointwise eigenvectors of the Ricci curvature tensor.
\end{remark}

\section*{VI. Examples}\label{sec:Examples}
\label{sec:6}
\stepcounter{section}

We now consider a few examples of manifolds on which the spectrum of $\curl$ can be computed explicitly.
The equivariant $\eta$-invariant of $\curl$ for these spaces has been computed with representation theoretic methods by Millson in \cite{Mill}.

\subsection{Flat tori}
Let $\Gamma\subset\R^n$ be a lattice and $\Gamma^*\subset\R^n$ its dual lattice,
$$
\Gamma^* = \{\gamma\in\R^n \mid \<\gamma,\mu\>\in\Z \mbox{ for all }\mu\in\Gamma\}.
$$
We determine the eigenvalues of $\curl$ on the flat torus $M=\R^n/\Gamma$.
By Theorem~\ref{thm:SpecSymIso} $m(\curl,\lambda)=m(\curl,-\lambda)$.
Hence, \eqref{eq:mlambda} yields
\begin{equation}
m(\curl,\lambda)  
= 
(-1)^{\frac{n-1}{2}}\frac12\sum_{p=0}^{\frac{n-1}{2}}(-1)^p\,  m(\Delta|_{\OD{p}},\lambda^2)\, .
\label{eq:torus1}
\end{equation}
On a flat torus we have $m(\Delta|_{\OD{p}},\lambda^2)={n \choose p}m(\Delta|_{\OD{0}},\lambda^2)$.
Inserting this into \eqref{eq:torus1} yields
\begin{align*}
m(\curl,\lambda)  
&= 
(-1)^{\frac{n-1}{2}}\frac12\sum_{p=0}^{\frac{n-1}{2}}(-1)^p\,  {n \choose p} m(\Delta|_{\OD{0}},\lambda^2) \\
&=
\frac12{n-1 \choose \frac{n-1}{2}}m(\Delta|_{\OD{0}},\lambda^2) \, .
\end{align*}
The spectrum of the Laplace-Beltrami operator on a flat torus can be computed using Fourier series and is well known to be
$$
m(\Delta|_{\OD{0}},\lambda^2) 
=
\#\bigg\{\mu\in\Gamma^* \,\bigg|\, |\mu|=\frac{|\lambda|}{2\pi}\bigg\} \, ,
$$
see \cite[Prop.~B.I.2]{BGM}.
We summarize:

\usetagform{simple}
\begin{theorem}
On the flat torus $M=\R^n/\Gamma$ a number $\lambda\neq0$ is an eigenvalue of the operator $\curl$ if and only if there exists a $\mu\in\Gamma^*$ such that $|\lambda|=2\pi|\mu|$.
The multiplicity of $\lambda$ then is 
\begin{equation*}
m(\curl,\lambda) 
=
\frac12{n-1 \choose \frac{n-1}{2}}\cdot \#\bigg\{\mu\in\Gamma^* \,\bigg|\, |\mu|=\frac{|\lambda|}{2\pi}\bigg\} \, .
\tag{\qed}
\end{equation*}
\end{theorem}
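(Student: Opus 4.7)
The strategy is to combine the general multiplicity formula \eqref{eq:mlambda} with the fact that on a flat torus the Hodge Laplacian on $p$-forms reduces, via a global parallel frame, to the scalar Laplacian acting on the components. First I invoke Theorem~\ref{thm:SpecSymIso}: the antipodal map $x\mapsto -x$ descends to an isometry of $\R^n/\Gamma$ which reverses orientation in odd dimension~$n$. Hence $m(\curl,\lambda)=m(\curl,-\lambda)$ for every $\lambda\neq 0$. Plugging this symmetry into \eqref{eq:mlambda} immediately produces the intermediate identity \eqref{eq:torus1} stated above the theorem.

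Next I would establish the key reduction
$$
m\bigl(\Delta|_{\OD{p}},\lambda^2\bigr) = \binom{n}{p}\, m\bigl(\Delta|_{\OD{0}},\lambda^2\bigr).
$$
The point is that on $\R^n/\Gamma$ the standard coordinate $1$-forms $dx^1,\dots,dx^n$ are globally parallel, so a parallel orthonormal basis of $\Lambda^p T^*M$ of size $\binom{n}{p}$ trivialises the bundle of $p$-forms. Under this trivialisation $\nabla^*\nabla$ acts componentwise as the scalar Laplacian, and the Bochner formula $\Delta=\nabla^*\nabla+\Ric$ collapses to $\Delta=\nabla^*\nabla$ since flat tori have vanishing Ricci curvature. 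Thus $\Delta$ on $p$-forms is just $\binom{n}{p}$ copies of the scalar Laplacian, giving the multiplicity identity.

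Inserting this into \eqref{eq:torus1} and applying the combinatorial identity
$$
\sum_{p=0}^{k}(-1)^p\binom{2k+1}{p} = (-1)^k\binom{2k}{k}
$$
(already used in the proof of Theorem~\ref{thm:Weyl}, with $k=(n-1)/2$) yields the stated prefactor $\tfrac12\binom{n-1}{(n-1)/2}$. Finally, I would cite the classical computation of the scalar Laplacian spectrum on a flat torus via Fourier series: eigenfunctions are $e^{2\pi i\<\mu,x\>}$ for $\mu\in\Gamma^*$, with eigenvalue $4\pi^2|\mu|^2$. Hence $m(\Delta|_{\OD{0}},\lambda^2) = \#\{\mu\in\Gamma^* : |\mu|=|\lambda|/(2\pi)\}$, which combines with the previous formula to give the asserted multiplicity and simultaneously identifies the eigenvalues as those $\lambda$ for which this set is nonempty.

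The potentially delicate step is the reduction $m(\Delta|_{\OD{p}},\lambda^2)=\binom{n}{p}\,m(\Delta|_{\OD{0}},\lambda^2)$, which hinges on flatness and the existence of a global parallel frame; everything else is bookkeeping. No new analytic input beyond the results already invoked in earlier sections is needed.
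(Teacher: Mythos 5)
Your argument is correct and follows the paper's proof essentially verbatim: symmetry of the spectrum via Theorem~\ref{thm:SpecSymIso}, the identity \eqref{eq:torus1}, the reduction $m(\Delta|_{\OD{p}},\lambda^2)=\binom{n}{p}\,m(\Delta|_{\OD{0}},\lambda^2)$, the binomial identity from the proof of Theorem~\ref{thm:Weyl}, and the Fourier-series description of the scalar spectrum. The only addition is your justification of the reduction step, which the paper asserts without proof; just note that for $p$-forms with $p\ge 2$ the Weitzenb\"ock curvature term is not literally $\Ric$, though it vanishes on a flat torus in any case, so your conclusion $\Delta=\nabla^*\nabla$ acting componentwise stands.
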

\usetagform{default}

\subsection{Round spheres}
Now let $M=S^n$ be the round sphere with constant sectional curvature $1$.
Again by Theorem~\ref{thm:SpecSymIso} the spectrum of $\curl$ is symmetric about $0$.
Theorem~6 in \cite{IK} tells us that $\lambda^2$ is an eigenvalue of the Hodge-Laplacian on $d^\dagger\OD{(n+1)/2}$ if and only if it is of the form $\lambda^2=(\frac{n+1}{2}+k)^2$ for $k=0,1,2,\ldots$.
By \eqref{eq:curlDeltaSpec} and \cite[Thm.~6]{IK} the multiplicity is then given by
\begin{align*}
2m(\curl,\lambda)
&=
m(\Delta|_{d^\dagger\OD{(n+1)/2}},\lambda^2) \\
&=
m(\Delta|_{\OD{(n+1)/2}\cap\ker(d)},\lambda^2) \\
&=
\frac{(n+k)!\cdot(n+2k+1)}{\frac{n-1}{2}!\cdot k!\cdot\frac{n-1}{2}!\cdot(\frac{n+1}{2}+k)\cdot(\frac{n+1}{2}+k)} \\
&=
2\frac{(n+k)!}{(\frac{n-1}{2}!)^2\cdot k!\cdot(\frac{n+1}{2}+k)}.
\end{align*}
We summarize:
\usetagform{simple}
\begin{theorem}\label{thm:roundsphere}
On the round sphere $M=S^n$ with sectional curvature $1$ a number $\lambda\neq0$ is an eigenvalue of the operator $\curl$ if and only if it is of the form 
$$
\lambda=\pm\Big(\frac{n+1}{2}+k\Big)
$$
for some $k=0,1,2,\ldots$.
The multiplicity of $\lambda$ then is
\begin{equation*}
m(\curl,\lambda)
=
\frac{(n+k)!}{(\frac{n-1}{2}!)^2\cdot k!\cdot(\frac{n+1}{2}+k)}.
\tag{\qed}
\end{equation*}
\end{theorem}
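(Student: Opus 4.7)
The plan rests on two previously-proved ingredients: Theorem~\ref{thm:SpecSymIso}, which yields symmetry of the $\curl$-spectrum whenever $M$ admits an orientation-reversing isometry, and Lemma~\ref{lemma:curlDeltaEigen}, which relates $\curl$-eigenforms for a nonzero eigenvalue $\lambda$ to Hodge-Laplace eigenforms of eigenvalue $\lambda^2$ lying in $d^\dagger\OD{(n+1)/2}$. The round sphere carries an orientation-reversing isometry, namely the restriction of any hyperplane reflection of $\R^{n+1}$, so Theorem~\ref{thm:SpecSymIso} delivers $m(\curl,\lambda)=m(\curl,-\lambda)$ for every $\lambda$. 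Combined with \eqref{eq:curlDeltaSpec}, the task reduces to computing
\[
2m(\curl,\lambda) = \dim\bigl(E(\Delta,\lambda^2)\cap d^\dagger\OD{(n+1)/2}\bigr).
\]

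To line this up with tabulated data on $S^n$, I would convert this coexact-form question into a closed-form question one degree higher. As in the proof of Lemma~\ref{lem:N++N-}, the exterior differential $d$ intertwines $\Delta$ with itself and restricts to a $\Delta$-equivariant isomorphism $d^\dagger\OD{(n+1)/2} \cong d\OD{(n-1)/2}$. For nonzero $\lambda^2$ the harmonic summand in \eqref{eq:kerd} contributes nothing, so the right-hand side above equals $m(\Delta|_{\OD{(n+1)/2}\cap\ker(d)},\lambda^2)$.

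Finally, I would quote \cite[Thm.~6]{IK}, which lists the $\Delta$-spectrum on closed $(n+1)/2$-forms of $S^n$: the eigenvalues are exactly $\bigl(\tfrac{n+1}{2}+k\bigr)^2$ for $k\in\Z_{\geq 0}$, together with explicit multiplicities. Taking square roots yields the eigenvalues $\pm(\tfrac{n+1}{2}+k)$ of $\curl$, and halving the Ikeda--Taniguchi multiplicity gives the stated formula after a routine simplification in which one of the two factors $\tfrac{n+1}{2}+k$ in the denominator is cancelled against the numerator factor $n+2k+1$. The main subtlety will be keeping track of which of the several multiplicity formulas in \cite{IK} corresponds to $\Delta$ acting on the closed (equivalently, coexact one degree down) $(n+1)/2$-forms, so that the counts match the subspace $\OD{(n+1)/2}\cap\ker(d)$ isolated above.
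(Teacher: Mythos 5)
Your proposal is correct and follows essentially the same route as the paper's proof: symmetry of the spectrum from an orientation-reversing isometry via Theorem~\ref{thm:SpecSymIso}, reduction via \eqref{eq:curlDeltaSpec} to the Hodge-Laplacian on coexact $\frac{n-1}{2}$-forms, the degree shift identifying these with closed $\frac{n+1}{2}$-forms, and the multiplicity data from \cite[Thm.~6]{IK} (note the source is Iwasaki--Katase, not Ikeda--Taniguchi, though you cite the correct item). The final cancellation you describe, using $n+2k+1=2\bigl(\tfrac{n+1}{2}+k\bigr)$, is exactly the simplification carried out in the paper.
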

\usetagform{default}

\begin{remark}
It is interesting to compare the spectrum of $\curl$ on $S^n$ to that of the Dirac operator acting on spinor fields.
By \cite[Thm.~1]{B1} the Dirac eigenvalues are the numbers given by
\begin{equation}
\pm\Big(\frac{n}{2}+k\Big), \quad k=0,1,2,\ldots
\label{eq:DiracS1}
\end{equation}
There now seems to be a contradiction for $n=1$ because $\curl$ then reduces to the Dirac operator.
The point is here that $S^1$ carries two different spin structures.
While \eqref{eq:DiracS1} gives the Dirac spectrum for the ``nontrivial'' spin structure, the formula in Theorem~\ref{thm:roundsphere} provides it for the ``trivial'' spin structure.
\end{remark}

\subsection{Spherical space forms}\label{subsec:SpaceForms}

We now study spherical space forms in $3$ dimensions, in other words, quotients of the round $3$-sphere $S^3$.
The group of orientation preserving isometries of $S^3$ is $\SOv$ acting by matrix multiplication from the left.
Oriented compact connected $3$-manifolds of constant sectional curvature $1$ are of the form $M=\Gamma\backslash S^3$ where $\Gamma\subset\SOv$ is a finite fixed point free subgroup.
One-forms on $M$ correspond to $\Gamma$-invariant one-forms on $S^3$ via pull-back along the projection map $S^3 \to \Gamma\backslash S^3$.
Hence, $M$ has the same $\curl$-eigenvalues as $S^3$, only the multiplicities on $M$ will in general be smaller than on $S^3$ (including the possibility $0$).
We encode this information in the following \emph{Poincar\'e series}:
\begin{align*}
F_+(z) &= \sum_{k=0}^\infty m(\curl,2+k)z^k,\\
F_-(z) &= \sum_{k=0}^\infty m(\curl,-(2+k))z^k,
\end{align*}
where the multiplicities $m(\curl,\pm(2+k))$ are those of $\curl$ on $M$.
Knowing the $\curl$-spectrum on $M$ is equivalent to knowing the power series $F_+(z)$ and $F_-(z)$.

\begin{lemma}
The power series $F_+(z)$ and $F_-(z)$ converge absolutely for $|z|<1$.
\end{lemma}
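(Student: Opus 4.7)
The plan is to bound the coefficients of $F_\pm(z)$ by the corresponding multiplicities on $S^3$ itself, which by Theorem~\ref{thm:roundsphere} grow only polynomially in $k$; absolute convergence for $|z|<1$ then follows from the root or ratio test.

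More concretely, I would first argue that since one-forms on $M=\Gamma\backslash S^3$ lift to $\Gamma$-invariant one-forms on $S^3$ (and the lift is an isometric embedding of the $L^2$-spaces after rescaling), every $\curl$-eigenspace on $M$ injects into the corresponding $\curl$-eigenspace on $S^3$. Consequently
\[
m(\curl,\pm(2+k))\ \text{on }M \ \le\ m(\curl,\pm(2+k))\ \text{on }S^3.
\]

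Next I would specialize the formula of Theorem~\ref{thm:roundsphere} to $n=3$. There the multiplicity on $S^3$ equals
\[
\frac{(3+k)!}{(1!)^2\cdot k!\cdot (2+k)}=\frac{(k+3)(k+2)(k+1)}{k+2}=(k+3)(k+1),
\]
which is a polynomial in $k$ of degree $2$.

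Therefore both sequences of coefficients of $F_+(z)$ and $F_-(z)$ are dominated by $(k+3)(k+1)$, and standard power-series theory (for instance, $\limsup_k \sqrt[k]{(k+3)(k+1)}=1$) gives radius of convergence at least $1$, hence absolute convergence for $|z|<1$. There is no real obstacle here; the only point to be slightly careful about is the comparison of multiplicities between $M$ and $S^3$, which is immediate once one observes that $\Gamma$-invariant eigenforms on $S^3$ are exactly the pullbacks of eigenforms on $M$.
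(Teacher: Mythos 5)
Your proof is correct and follows essentially the same route as the paper: the paper also majorizes both series by $\sum_{k\ge 0}(k+3)(k+1)z^k$, using the fact (stated just before the lemma) that multiplicities on $\Gamma\backslash S^3$ are bounded by those on $S^3$ computed in Theorem~\ref{thm:roundsphere}, and then applies the ratio test to get radius of convergence $1$.
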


\begin{proof}
By Theorem~\ref{thm:roundsphere} with $n=3$ both $F_+(z)$ and $F_-(z)$ can be majorized by 
$$
\sum_{k=0}^\infty (k+3)(k+1)z^k .
$$
That power series has convergence radius $1$ because
\begin{equation*}
\lim_{k\to\infty} \frac{(k+3)(k+1)}{(k+4)(k+2)} = 1.
\qedhere
\end{equation*}
\end{proof}

The $\SOv$-module of $2$-forms decomposes into those of selfdual and antiselfdual $2$-forms, $\Lambda^2\R^4 = \Lambda^+ \oplus \Lambda^-$.
Let $\chi^\pm:\SOv\to\R$ be the corresponding characters.

\begin{theorem}\label{thm:SpecSpaceform}
Let $\Gamma\subset\SOv$ be a finite fixed point free subgroup.
Then, the $\curl$-spectrum of $M=\Gamma\backslash S^3$ is given by
\begin{align*}
F_+(z) &= \frac{1}{1+z^2}\Big(1+\frac{1}{|\Gamma|}\sum_{\gamma\in\Gamma} \frac{\chi^+(\gamma)-1 -z^2(\chi^-(\gamma)-1)}{\det(1-z\gamma)}\Big),\\
F_-(z) &= \frac{1}{1+z^2}\Big(1+\frac{1}{|\Gamma|}\sum_{\gamma\in\Gamma} \frac{\chi^-(\gamma)-1 -z^2(\chi^+(\gamma)-1)}{\det(1-z\gamma)}\Big).
\end{align*}
\end{theorem}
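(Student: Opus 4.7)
The plan is to compute each multiplicity on $M=\Gamma\backslash S^3$ as the dimension of $\Gamma$-fixed vectors in the corresponding curl eigenspace on $S^3$, assemble the generating functions, and recognize the closed form via the representation theory of $\SOv=(SU(2)_L\times SU(2)_R)/\{\pm 1\}$. Since $\curl$ is $\SOv$-equivariant, each eigenspace $E_{\pm,k}:=E(\curl,\pm(2+k))$ on $S^3$ is a finite-dimensional $\SOv$-module, and forms on $M$ are exactly the $\Gamma$-invariants on $S^3$. Standard averaging yields $m_M(\curl,\pm(2+k))=|\Gamma|^{-1}\sum_{\gamma\in\Gamma}\chi_{\pm,k}(\gamma)$, where $\chi_{\pm,k}$ is the character on $E_{\pm,k}$, so it suffices to prove the identity at the level of the individual character generating functions $G_\pm(z,\gamma):=\sum_{k\ge 0}\chi_{\pm,k}(\gamma)\,z^k$ and then sum over $\Gamma$.

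The main technical ingredient, and the principal obstacle, is the identification of $E_{\pm,k}$ as an $\SOv$-module. With $S^3\cong SU(2)$ and $V_j$ the $(2j+1)$-dimensional irreducible $SU(2)$-representation, one first checks that the left-invariant $1$-forms on $SU(2)$ span $E_{+,0}$ and carry the $\SOv$-representation $\Lambda^+\cong V_0\boxtimes V_1$ (with the orientation convention making $\chi^+(\gamma)=\chi_{V_1}(\gamma_R)$); right-invariant forms give $E_{-,0}\cong\Lambda^-\cong V_1\boxtimes V_0$. Combining this base case with the Hodge decomposition, Peter--Weyl on $S^3$, and Clebsch--Gordan --- essentially the content already used in Theorem~\ref{thm:roundsphere} via \cite{IK} --- yields
$$
E_{+,k}\cong V_{k/2}\boxtimes V_{(k+2)/2},\qquad E_{-,k}\cong V_{(k+2)/2}\boxtimes V_{k/2}.
$$
Dimensions $(k+1)(k+3)$ match on both sides, and the isotypic sum $\bigoplus_{j\ge 0} V_j\boxtimes V_{j+1}\oplus \bigoplus_{j\ge 1}V_j\boxtimes V_{j-1}$ recovers the full coexact part $d^\dagger\OD{2}\cap \OL{1}$ of $\Omega^1(S^3)$.

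For the final algebraic step, write $e_k:=\chi_{V_{k/2}}(\gamma_L)$, $e'_k:=\chi_{V_{k/2}}(\gamma_R)$, and $\tilde G(z,\gamma):=\sum_{k\ge 0}e_k e'_k\,z^k$. A direct Cauchy-type summation, using $\alpha\beta=\delta\epsilon=1$ for the $SU(2)$-eigenvalues, produces the Molien identity $\tilde G(z,\gamma)=(1-z^2)/\det(1-z\gamma)$ --- the classical character generating function of harmonic polynomials on $\R^4$. The Clebsch--Gordan recursion $V_j\otimes V_1=V_{j+1}\oplus V_j\oplus V_{j-1}$, i.e.\ $e'_{k+2}=e'_k(e'_2-1)-e'_{k-2}$ for $k\ge 2$, substituted into $G_+(z,\gamma)=\sum_k e_k e'_{k+2}\,z^k$ and telescoped yields a linear relation between $G_+$ and $z^2 G_-$; the symmetric identity (via $\gamma_L\leftrightarrow\gamma_R$) gives the companion relation. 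Solving the resulting $2\times 2$ system, the polynomial correction terms collapse thanks to the further Clebsch--Gordan consequence $e_3=e_1(e_2-1)$, leaving
$$
(1-z^4)\,G_+(z,\gamma)=(1-z^2)+\bigl((\chi^+(\gamma)-1)-z^2(\chi^-(\gamma)-1)\bigr)\,\tilde G(z,\gamma).
$$
Substituting the formula for $\tilde G$ and dividing by $1-z^4=(1-z^2)(1+z^2)$ produces exactly the claimed closed form for $G_+(z,\gamma)$; averaging over $\gamma\in\Gamma$ gives the formula for $F_+(z)$, and the formula for $F_-(z)$ follows by repeating the argument with $\gamma_L$ and $\gamma_R$ exchanged.
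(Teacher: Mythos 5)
Your reduction to $\Gamma$-averages of characters, your Molien identity $\tilde G(z,\gamma)=(1-z^2)/\det(1-z\gamma)$ (which is Ikeda's formula, also used in the paper), and your final telescoping are all fine; in fact your relations $G_+(z,\gamma)+z^2G_-(z,\gamma)=1+(\chi^+(\gamma)-1)\tilde G(z,\gamma)$ and its mirror become, after averaging over $\Gamma$, exactly the linear system $F_++z^2F_-=G_+$, $F_-+z^2F_+=G_-$ that the paper solves. The genuine gap is in the step you yourself flag as the principal obstacle: the identification $\E(\curl,2+k)\cong V_{k/2}\boxtimes V_{(k+2)/2}$ for $k\ge1$. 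The ingredients you cite (the base case $k=0$, Hodge decomposition, Peter--Weyl, Clebsch--Gordan, and the Laplace eigenvalues from \cite{IK}) do show that the coexact part of $\Omega^1_{L^2}(S^3)$ is $\bigoplus_{k\ge0}\bigl(V_{k/2}\boxtimes V_{(k+2)/2}\oplus V_{(k+2)/2}\boxtimes V_{k/2}\bigr)$ with each summand irreducible of multiplicity one, hence contained in a single $\curl$-eigenspace, and a dimension count then forces, for each $k$, one summand to be $\E(\curl,2+k)$ and the other $\E(\curl,-(2+k))$. But none of these ingredients determines \emph{which} chirality carries the positive eigenvalue once $k\ge1$: the base case only settles $k=0$, and the symmetry of the $S^3$-spectrum is insensitive to the assignment. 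Since the entire content of the theorem is the asymmetry between $F_+$ and $F_-$ (this is what produces the nonsymmetric lens-space spectrum via Corollary~\ref{cor:SpecAsym}), this sign is precisely what may not be assumed, and if it were reversed for even a single $k$ your generating-function recursion would no longer telescope and the formula would change.

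The paper closes exactly this point with the modified connection $\nh$ from the proof of Theorem~\ref{thm:3Dest}: by \eqref{eq:Weitzen}, on coexact $1$-forms one has $(\curl-1)^2=\nh^*\nh+1$, and on $\H^k\otimes\E(\curl,2)$ the operator $\nh^*\nh$ acts as $\Delta\otimes\id$ with eigenvalue $k(k+2)$, so $\curl$ can only take the values $k+2$ and $-k$ there. Since $V_{k/2}\boxtimes V_{(k+2)/2}$ sits in the coexact part of $\H^k\otimes\E(\curl,2)$ and its Laplace eigenvalue forces $|\curl|=k+2$ on it, the sign must be $+$. You need this argument (or an equivalent one, e.g.\ an explicit computation of $\ast d$ on a highest-weight vector for each $k$) to license your module identification. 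With that inserted, your proof is correct and amounts to a more explicit, purely representation-theoretic derivation of the same linear system; the paper's route via the Weitzenb\"ock identity avoids identifying the individual eigenspaces as $\SOv$-modules altogether, working only with the sums $\E(\curl,2+k)\oplus\E(\curl,-k)$.
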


\begin{example}
We determine the $\curl$-spectrum of real projective $3$-space $\mathbb{RP}^3$.
In this case $\Gamma=\{1,-1\}$ and both $\gamma=1$ and $\gamma=-1$ act trivially on $2$-forms.
Hence, $\chi^+(\gamma)=\chi^-(\gamma)=3$.
Therefore
\begin{align*}
F_\pm(z) 
&=
\frac{1}{1+z^2}\Big(1+\frac{1}{2}\Big\{\frac{3-1-z^2(3-1)}{(1-z)^4}+\frac{3-1-z^2(3-1)}{(1+z)^4}\Big\}\Big)\\
&=
\frac{1}{1+z^2}\Big(1+\frac{1+z}{(1-z)^3}+\frac{1-z}{(1+z)^3}\Big)\\
&=
\sum_{j=0}^\infty (4(j+1)^2-1)z^{2j} \, .
\end{align*}
This shows that on $\mathbb{RP}^3$ the number $\pm(2+k)$ is a $\curl$-eigenvalue if and only if $k$ is even and in this case it has the same multiplicity as on $S^3$.
\end{example}

For the smallest positive and the largest negative $\curl$-eigenvalue of a spherical space form we get

\begin{corollary}\label{cor:SphericalKleinst}
The multiplicity of the smallest positive $\curl$-eigenvalue of $M=\Gamma\backslash S^3$ is given by
$$
m(\curl,2) = \frac{1}{|\Gamma|}\sum_{\gamma\in\Gamma}\chi^+(\gamma) \le 3.
$$
The maximal value $3$ is attained if and only if $\Gamma$ acts trivially on $\Lambda^+$.
Similarly, the multiplicity of the largest negative $\curl$-eigenvalue is given by
$$
m(\curl,-2) = \frac{1}{|\Gamma|}\sum_{\gamma\in\Gamma}\chi^-(\gamma) \le 3.
$$
The maximal value $3$ is attained if and only if $\Gamma$ acts trivially on $\Lambda^-$.
\end{corollary}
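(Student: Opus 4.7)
The key observation is that $m(\curl,2)$ equals the constant coefficient $F_+(0)$ in the Poincar\'e series, and similarly $m(\curl,-2)=F_-(0)$, so the plan is simply to evaluate the formulas of Theorem~\ref{thm:SpecSpaceform} at $z=0$ and interpret the result representation-theoretically.

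First I would set $z=0$ in
$$
F_+(z) = \frac{1}{1+z^2}\Big(1+\frac{1}{|\Gamma|}\sum_{\gamma\in\Gamma} \frac{\chi^+(\gamma)-1 -z^2(\chi^-(\gamma)-1)}{\det(1-z\gamma)}\Big).
$$
Since $\det(1-0\cdot\gamma)=\det(\id)=1$ and the $z^2$-summand vanishes, the bracket simplifies to $1+\frac{1}{|\Gamma|}\sum_{\gamma}(\chi^+(\gamma)-1)$, and the prefactor is $1$. The $-1$'s cancel the outer $1$, leaving $m(\curl,2)=\frac{1}{|\Gamma|}\sum_{\gamma\in\Gamma}\chi^+(\gamma)$. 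The computation for $F_-(0)$ is identical with $\chi^+$ replaced by $\chi^-$.

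Next I would recognize the resulting averages as dimensions of invariant subspaces via the standard character-theoretic projection formula: for any finite group $\Gamma$ acting on a finite-dimensional complex (or, as here, real) representation $V$ with character $\chi_V$, the operator $P=\frac{1}{|\Gamma|}\sum_{\gamma}\gamma$ is the projection onto $V^\Gamma$, and hence $\dim V^\Gamma = \tr(P) = \frac{1}{|\Gamma|}\sum_{\gamma}\chi_V(\gamma)$. Applying this to $V=\Lambda^\pm$ gives
$$
m(\curl,\pm 2) = \dim(\Lambda^\pm)^{\Gamma}.
$$

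Finally, since $\dim\Lambda^\pm = 3$, the bound $m(\curl,\pm 2)\le 3$ is immediate, with equality if and only if $(\Lambda^\pm)^\Gamma = \Lambda^\pm$, i.e.\ $\Gamma$ acts trivially on $\Lambda^\pm$. There is no real obstacle here; the content of the corollary lies entirely in Theorem~\ref{thm:SpecSpaceform}, and the only care needed is to check that the two $-1$'s really do cancel when the prefactor $\frac{1}{1+z^2}$ is evaluated at $z=0$.
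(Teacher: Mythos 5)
Your proposal is correct and takes essentially the same route as the paper: both evaluate the formula of Theorem~\ref{thm:SpecSpaceform} at $z=0$, where $\det(1-z\gamma)=1$ and the $z^2$-terms drop out, so that the $-1$'s cancel the leading $1$ and $m(\curl,\pm 2)=\frac{1}{|\Gamma|}\sum_{\gamma\in\Gamma}\chi^\pm(\gamma)$. The only (harmless) difference is the final step: the paper bounds $\chi^\pm(\gamma)\le 3$ pointwise, with equality iff $\gamma$ acts trivially on $\Lambda^\pm$, whereas you identify the average as $\dim\left((\Lambda^\pm)^\Gamma\right)$ via the character projection formula --- both yield the bound by $3=\dim\Lambda^\pm$ and the same characterization of equality.
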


\begin{proof}
The multiplicity is given by
\begin{equation*}
m(\curl,\pm 2)
=
F_\pm(0)
=
1+\frac{1}{|\Gamma|}\sum_{\gamma\in\Gamma}\frac{\chi^\pm(\gamma) -1}{1}
=
\frac{1}{|\Gamma|}\sum_{\gamma\in\Gamma}\chi^\pm(\gamma).
\end{equation*}
Now for any $\gamma$ we have $|\chi^\pm(\gamma)|\le 3$ with equality if and only if $\gamma$ acts trivially on $\Lambda^\pm$.
The assertion follows.
\end{proof}

We can now finish the proof of Theorem~\ref{thm:Rigidity3D}.

\begin{proof}[Completion of the proof of Theorem~\ref{thm:Rigidity3D}.]
If $m(\curl,2)=m(\curl,-2)=3$ then $\Gamma$ must act trivially on $\Lambda^+$ and on $\Lambda^-$.
The only elements of $\SOv$ doing that are $\gamma=1$ and $\gamma=-1$.
Hence, either $\Gamma$ is trivial and $M=S^3$ or $\Gamma=\{1,-1\}$ and $M=\mathbb{RP}^3$.
\end{proof}

\begin{corollary}\label{cor:SpecAsym}
The $\curl$-spectrum on $M=\Gamma\backslash S^3$ is symmetric about $0$ if and only if 
$$
\sum_{\gamma\in\Gamma\setminus\{1\}}\frac{\chi^+(\gamma)-\chi^-(\gamma)}{\det(1-z\gamma)} = 0.
$$
\end{corollary}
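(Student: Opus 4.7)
The plan is to derive this directly from Theorem~\ref{thm:SpecSpaceform} by computing $F_+(z)-F_-(z)$ and observing that an algebraic miracle occurs.

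First I would reformulate the spectral symmetry condition in terms of the Poincaré series. By Theorem~\ref{thm:roundsphere}, the only possible nonzero $\curl$-eigenvalues on $M=\Gamma\backslash S^3$ are $\pm(2+k)$ for $k\geq 0$, so the spectrum is symmetric about $0$ if and only if $m(\curl,2+k)=m(\curl,-(2+k))$ for every $k\geq 0$, which in turn is equivalent to the identity $F_+(z)=F_-(z)$ of holomorphic functions on the unit disk.

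Next I would subtract the two formulas provided by Theorem~\ref{thm:SpecSpaceform}. The constant terms $1$ cancel, and for each $\gamma\in\Gamma$ the numerator combines to
\[
\bigl(\chi^+(\gamma)-1-z^2(\chi^-(\gamma)-1)\bigr)-\bigl(\chi^-(\gamma)-1-z^2(\chi^+(\gamma)-1)\bigr) = (1+z^2)\bigl(\chi^+(\gamma)-\chi^-(\gamma)\bigr).
\]
The factor $1+z^2$ cancels against the prefactor $\frac{1}{1+z^2}$, leaving
\[
F_+(z)-F_-(z) = \frac{1}{|\Gamma|}\sum_{\gamma\in\Gamma}\frac{\chi^+(\gamma)-\chi^-(\gamma)}{\det(1-z\gamma)}.
\]

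Finally I would note that for $\gamma=1$ the action on both $\Lambda^+$ and $\Lambda^-$ is trivial, so $\chi^+(1)=\chi^-(1)=3$ and the $\gamma=1$ term vanishes. Hence the sum reduces to a sum over $\Gamma\setminus\{1\}$, and $F_+(z)=F_-(z)$ is equivalent to the vanishing of this rational function of $z$, which is the stated condition. There is no genuine obstacle here; the only slightly delicate point is recognizing that the $(1+z^2)$ factor cancels, which is what makes the corollary clean.
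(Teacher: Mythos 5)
Your proposal is correct and follows essentially the same route as the paper: subtract the two formulas of Theorem~\ref{thm:SpecSpaceform}, observe that the $(1+z^2)$ factor cancels to give $F_+(z)-F_-(z)=\frac{1}{|\Gamma|}\sum_{\gamma\in\Gamma}\frac{\chi^+(\gamma)-\chi^-(\gamma)}{\det(1-z\gamma)}$, and drop the $\gamma=1$ term using $\chi^+(1)=\chi^-(1)=3$. Your explicit remark that symmetry of the spectrum is equivalent to $F_+(z)=F_-(z)$ as holomorphic functions on the unit disk is a point the paper leaves implicit, but otherwise the arguments coincide.
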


\begin{proof}
This follows directly from
\begin{equation*}
F_+(z)-F_-(z)
=
\frac{1}{|\Gamma|}\sum_{\gamma\in\Gamma}\frac{\chi^+(\gamma)-\chi^-(\gamma)}{\det(1-z\gamma)}
\end{equation*}
and $\chi^+(1)=\chi^-(1)=3$.
\end{proof}

\begin{example}
Put
$$
R(\theta_1,\theta_2) :=
\begin{pmatrix}
\cos(\theta_1) & -\sin(\theta_1) & 0 & 0 \\
\sin(\theta_1) & \cos(\theta_1) & 0 & 0 \\
0 & 0 & \cos(\theta_2) & -\sin(\theta_2) \\
0 & 0 & \sin(\theta_2) & \cos(\theta_2)
\end{pmatrix}
\in \SOv \, .
$$
We choose $\Gamma=\{1,R(\frac{2\pi}{3},\frac{2\pi}{3}),R(\frac{4\pi}{3},\frac{4\pi}{3})\}$.
Then, $M=\Gamma\backslash S^3$ is called a \emph{lens space}.
If $e_1,e_2,e_3,e_4$ is a positively oriented orthonormal basis of $\R^4$ then $e_1\wedge e_2+e_3\wedge e_4, e_1\wedge e_4 + e_2\wedge e_3, e_1\wedge e_3 - e_2 \wedge e_4$ is a basis of $\Lambda^+$.
A straighforward computation shows that w.r.t.\ this basis the action of $R(\theta_1,\theta_2)$ on $\Lambda^+$ is given by the matrix
$$
\begin{pmatrix}
1 & 0 & 0 \\
0 & \cos(\theta_1+\theta_2) & \sin(\theta_1+\theta_2) \\
0 & -\sin(\theta_1+\theta_2) & \cos(\theta_1+\theta_2)
\end{pmatrix}
$$
and hence 
\begin{equation*}
\chi^+(R(\theta_1,\theta_2))=1+2\cos(\theta_1+\theta_2).
\end{equation*}
Similarly one sees 
\begin{equation*}
\chi^-(R(\theta_1,\theta_2))=1+2\cos(\theta_1-\theta_2).
\end{equation*}
In order to apply the criterion in Corollary~\ref{cor:SpecAsym} we compute 
\begin{align}
\chi^+(R({2\pi}/{3},{2\pi}/{3})) &= 1+2\cos({4\pi}/{3})=0, \label{eq:chi+1}
\\
\chi^-(R({2\pi}/{3},{2\pi}/{3})) &= 1+2\cos(0)=3, \label{eq:chi-1}
\\
\det(1-zR({2\pi}/{3},{2\pi}/{3}) 
&=
\det
\begin{pmatrix}
1-z\cos(2\pi/3) & z\sin(2\pi/3) \\
-z\sin(2\pi/3) & 1-z\cos(2\pi/3)
\end{pmatrix}
^2 \notag\\
&=
\det
\begin{pmatrix}
1+z/2 & z\sqrt{3}/2 \\
-z\sqrt{3}/2 & 1+z/2
\end{pmatrix}
^2\notag\\
&=
(1+z+z^2)^2\notag
\end{align}
Thus for $\gamma=R({2\pi}/{3},{2\pi}/{3})$ we get
$$
\frac{\chi^+(\gamma)-\chi^-(\gamma)}{\det(1-z\gamma)} 
=
\frac{-3}{(1+z+z^2)^2}.
$$
Similarly, for $\gamma=R({4\pi}/{3},{4\pi}/{3})$ we get
$$
\frac{\chi^+(\gamma)-\chi^-(\gamma)}{\det(1-z\gamma)} 
=
\frac{-3}{(1+z+z^2)^2}
$$
as well.
Corollary~\ref{cor:SpecAsym} now shows that the $\curl$-spectrum of the lens space $M$ is not symmetric about $0$.
Specifically, from \eqref{eq:chi+1}, \eqref{eq:chi-1}, the corresponding values for $\frac{4\pi}{3}$, and Corollary~\ref{cor:SphericalKleinst} we see that $m(\curl,2)=0$ while $m(\curl,-2)=3$.
\end{example}

It remains to prove Theorem~\ref{thm:SpecSpaceform}.
We denote the eigenspace of an operator $D$ on $S^3$ to the eigenvalue $\lambda$ by $\E(D,\lambda)$.
Let $\iota:S^3\hookrightarrow\R^4$ be the inclusion map.
We regard the elements of $\Lambda^\pm$ as constant (parallel) $2$-forms on $\R^4$.

\begin{lemma}\label{lem:Lambda+-}
The map $\omega\mapsto *\iota^*\omega$ yields $\SOv$-equivariant isomorphisms $\Lambda^+\to\E(\curl,2)$ and $\Lambda^-\to\E(\curl,-2)$.
\end{lemma}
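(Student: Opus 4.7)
I will first dispose of the easy ingredients and then carry out the main computation. For dimensions: Theorem~\ref{thm:roundsphere} with $n=3$, $k=0$ yields $\dim\E(\curl,\pm 2)=3=\dim\Lambda^\pm$. $\SOv$-equivariance of the map $\omega\mapsto\ast_{S^3}\iota^\ast\omega$ is immediate, since $\iota\colon S^3\hookrightarrow\R^4$ is $\SOv$-equivariant and the Hodge star on $S^3$ commutes with orientation-preserving isometries. Injectivity of $\omega\mapsto\iota^\ast\omega$ on $\Lambda^2\R^4$ is a short linear-algebra argument: given $v,w\in\R^4$ pick a unit vector $x\in S^3$ perpendicular to both; then $v,w\in T_xS^3$, so $\iota^\ast\omega=0$ forces $\omega(v,w)=0$ for arbitrary $v,w$. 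Once we verify that $\Lambda^\pm$ indeed lands in $\E(\curl,\pm 2)$, injectivity plus the dimension match upgrades the map to the asserted isomorphism.

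The content is the eigenvalue identity. My strategy: on $\R^4\setminus\{0\}$ decompose a parallel $2$-form $\omega$ pointwise as $\omega=dr\wedge\alpha+\beta$, where $\alpha:=i_{\partial_r}\omega$ has no $dr$-component and $\beta:=\omega-dr\wedge\alpha$ is purely spherical. Since $\omega$ is parallel (hence closed), Cartan's magic formula gives $d\alpha=\mathcal{L}_{\partial_r}\omega$. A straightforward computation using $\mathcal{L}_{\partial_r}dx^i=d(x^i/r)$ and the constancy of the coefficients of $\omega$ produces
\[
d\alpha=\tfrac{2}{r}\,\omega-\tfrac{1}{r}\,dr\wedge\alpha.
\]
Pulling this back via $\iota$ (where $\iota^\ast dr=0$ and $r=1$) gives the crucial identity
\[
d(\iota^\ast\alpha)=2\,\iota^\ast\omega\qquad\text{on }S^3.
\]

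Next I will convert the selfduality condition into a relation between $\alpha$ and $\beta$. The Euclidean metric on $\R^4$ splits orthogonally at each point of $S^3$ as $dr^2\oplus g_{S^3}$, so the product-type Hodge-star formulas
\[
\ast_{\R^4}(dr\wedge\alpha)=\ast_{S^3}\alpha,\qquad \ast_{\R^4}\beta=dr\wedge\ast_{S^3}\beta
\]
apply pointwise on $S^3$. Summing, $\ast_{\R^4}\omega=\ast_{S^3}\alpha+dr\wedge\ast_{S^3}\beta$. Matching this against $\ast_{\R^4}\omega=\pm\omega$ on $\Lambda^\pm$ forces $\ast_{S^3}\beta=\pm\alpha$, hence $\ast_{S^3}\iota^\ast\omega=\ast_{S^3}\iota^\ast\beta=\pm\iota^\ast\alpha$. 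Finally, using $\curl=\ast_{S^3}d$ and the crucial identity,
\[
\curl(\ast_{S^3}\iota^\ast\omega)=\pm\ast_{S^3}d(\iota^\ast\alpha)=\pm 2\,\ast_{S^3}\iota^\ast\beta=\pm 2(\pm\iota^\ast\alpha)=2\,\iota^\ast\alpha,
\]
which is $+2\cdot\ast_{S^3}\iota^\ast\omega$ on $\Lambda^+$ and $-2\cdot\ast_{S^3}\iota^\ast\omega$ on $\Lambda^-$, as required.

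The main obstacle is honest sign-bookkeeping in the $\ast_{\R^4}$-versus-$\ast_{S^3}$ comparison and in the selfdual/antiselfdual splitting; the geometric input reduces to the one-line identity $d\alpha=2\omega-dr\wedge\alpha$ at $r=1$, which is precisely where the eigenvalue $\pm 2$ of Theorem~\ref{thm:roundsphere} materializes.
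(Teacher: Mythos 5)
Your proof is correct, and it takes a genuinely different route from the paper's. The paper's argument is differential-geometric: using the Gauss equation it computes $\nabla_X(*\iota^*\omega)=\iota^*(\bar\ast\omega)(X,\cdot)$ for parallel $\omega$, concludes that $*\iota^*\omega$ is parallel for the modified connection $\nh_X\eta=\nabla_X\eta+*(X^\flat\wedge\eta)$ from the proof of Theorem~\ref{thm:3Dest}, identifies the $\nh$-parallel $1$-forms with $\E(\curl,2)$, and then obtains bijectivity from Schur's lemma together with the dimension count $\dim\E(\curl,\pm2)=3$. You instead verify the eigenvalue equation $\curl(*\iota^*\omega)=\pm2\,(*\iota^*\omega)$ by pure exterior calculus: the radial decomposition $\omega=dr\wedge\alpha+\beta$ with $\alpha=i_{\partial_r}\omega$, Cartan's formula giving $\mathcal{L}_{\partial_r}\omega=\tfrac2r\omega-\tfrac1r dr\wedge\alpha$ and hence $d(\iota^*\alpha)=2\,\iota^*\omega$, and the product formulas for the Hodge star converting selfduality into $\ast_{S^3}\iota^*\omega=\pm\iota^*\alpha$; I have checked these identities and the sign bookkeeping against the paper's orientation convention (outward normal first), and they are consistent. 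You also replace Schur's lemma by an elementary pointwise injectivity argument, while both proofs ultimately invoke Theorem~\ref{thm:roundsphere} for $\dim\E(\curl,\pm2)=3$ to get surjectivity. What the paper's route buys is the intermediate fact that the image consists of $\nh$-parallel forms, which meshes with the later proof of Theorem~\ref{thm:SpecSpaceform} (trivialization of $T^*S^3$ by $\nh$-parallel forms, $\nh^*\nh$ acting as $\Delta\otimes\id$); your route does not produce this, so that step would still have to be sourced from Section 4. What your route buys is self-containedness and transparency: it avoids the Gauss equation and the Weitzenb\"ock machinery entirely and shows exactly where the eigenvalue $\pm2$ comes from.
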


\begin{proof}
Denote the exterior unit normal vector field of $S^3$ by $\n$ and the Levi-Civita connection of $\R^4$ by $\nb$.
For vector fields on $S^3$ the Gauss equation says
$$
\nb_XY = \nabla_XY -\<X,Y\>\n .
$$
This implies for $2$-forms $\omega$ on $\R^4$ (assuming without loss of generality $\nabla_XY=\nabla_XZ=0$ at the point under consideration):
\begin{align*}
(\nabla_X (\iota^*\omega))(Y,Z)
&=
\partial_X(\iota^*\omega(Y,Z)) \\
&=
\partial_X(\omega(\iota_*Y,\iota_*Z)) \\
&=
(\nb_X\omega)(\iota_*Y,\iota_*Z) + \omega(\nb_X(\iota_*Y),\iota_*Z)  + \omega(\iota_*Y,\nb_X(\iota_*Z)) \\
&=
(\iota^*\nb_X\omega)(Y,Z) - \omega(\<X,Y\>\n,\iota_*Z)  - \omega(\iota_*Y,\<X,Z\>\n) \\
&=
(\iota^*\nb_X\omega)(Y,Z) + \<X,Y\>\omega(\iota_*Z,\n)  - \<X,Z\>\omega(\iota_*Y,\n) .
\end{align*}
In particular, if $\omega$ is parallel then
\begin{equation}
(\nabla_X (\iota^*\omega))(Y,Z) = \<X,Y\>\omega(\iota_*Z,\n)  - \<X,Z\>\omega(\iota_*Y,\n) .
\label{eq:Gauss}
\end{equation}
Denote the Hodge-star operator on $S^3$ by $*$ and that on $\R^4$ by $\sb$.
Let $e_1$, $e_2$, $e_3$ be a local positively oriented orthonormal frame on $S^3$.
Then, $\n$, $e_1$, $e_2$, $e_3$ forms a positively oriented orthonormal frame on $\R^4$.
Using \eqref{eq:Gauss} we compute for parallel $\omega$:
\begin{align*}
\big(\nabla_X(*\iota^*\omega)\big)(e_1)
&=
*\big(\nabla_X(\iota^*\omega)\big)(e_1) \\
&=
\big(\nabla_X(\iota^*\omega)\big)(e_2,e_3) \\
&=
\<X,e_2\>\omega(\iota_*e_3,\n)  - \<X,e_3\>\omega(\iota_*e_2,\n) \\
&=
-\<X,e_2\>\sb\omega(\iota_*e_1,\iota_*e_2)  - \<X,e_3\>\sb\omega(\iota_*e_1,\iota_*e_3) \\
&=
-\sb\omega(\iota_*e_1,\iota_*X) \\
&=
\iota^*(\sb\omega)(X,e_1) .
\end{align*}
Since the direction of $e_1$ can be chosen arbitrarily we find
\begin{equation}
\nabla_X(*\iota^*\omega) = \iota^*(\sb\omega)(X,\cdot) .
\end{equation}
Now let $\omega\in\Lambda^+$.
Then, $\omega$ is $\nb$-parallel and satisfies $\omega=\sb\omega$.
Thus $\eta:=*\iota^*\omega\in\Omega^1_{C^\infty}(S^3)$ satisfies
$$
\nabla_X\eta
=
\iota^*\omega(X,\cdot)
=
(*\eta)(X,\cdot)
=
- *(X^\flat\wedge\eta) ,
$$
hence $\eta$ is $\nh$-parallel.
Since the space of $\nh$-parallel $1$-forms on $S^3$ coincides with $\E(\curl,2)$ the map $\omega\mapsto *\iota^*\omega$ restricts to a linear map $\Lambda^+\to \E(\curl,2)$.
The elements $\gamma\in\SOv$ act by orientation-preserving isometries, hence they commute with $*$ and $\iota^*$.
Thus the map is $\SOv$-equivariant.
Furthermore, the map is nontrivial, $\Lambda^+$ is an irreducible $\SOv$-module and $\Lambda^+$ and $\E(\curl,2)$ both have dimension $3$.
Thus by Schur's lemma the map is an isomorphism.

The statement about $\Lambda^-\to\E(\curl,-2)$ is analogous.
\end{proof}

\begin{proof}[Proof of Theorem~\ref{thm:SpecSpaceform}.]
Since the cotangent bundle is trivialized by $\nh$-parallel $1$-forms, the Hilbert space $\Omega^1_{L^2}(S^3)$ is spanned by products $f\eta$ where $f\in\Omega^0_{L^2}(S^3)$ and $\eta$ is $\nh$-parallel.
It is well known that $\Omega^0_{L^2}(S^3)=\bigoplus_{k=0}^\infty\H^k$ where $\H^k$ is the space of harmonic homogeneous polynomials of degree $k$ on $\R^4$, restricted to $S^3$, see e.g.\ \cite[Sec.~C.I.C]{BGM}.
Denote the character of the representation of $\SOv$ on $\H^k$ by $\chi_k$.
Then, by Lemma~\ref{lem:Lambda+-}, the character of the $\SOv$-module $\Omega^1_{L^2}(S^3)$ is given by $\sum_{k=0}^\infty\chi_k\cdot\chi^+$.
Since $b_1(S^3)=0$ the Hodge decomposition reads
$$
\Omega^1_{L^2}(S^3) = \left( d^\dagger\Omega^2_{\mathcal{D}'}(S^3) \oplus d\Omega^0_{\mathcal{D}'}(S^3)\right) \cap \Omega_{L^2}^1(S^3).
$$
Now $d:\Omega^0_{\mathcal{D}'}(S^3)\to \Omega^1_{\mathcal{D}'}(S^3)$ is $\SOv$-equivariant and has kernel $\H^0$.
Thus the character of the $\SOv$-module $d^\dagger\Omega^2_{\mathcal{D}'}(S^3) \cap \Omega_{L^2}^1(S^3)$ is given by $1+\sum_{k=0}^\infty\chi_k\cdot(\chi^+-1)$.

On $\H^k\otimes \E(\curl,2)\subset \Omega^1_{L^2}(S^3)$ the connection-Laplacian $\nh^*\nh$ acts as $\Delta\otimes \id$ and hence has the same eigenvalue as the Laplacian $\Delta$ on $\H^k$, namely $k(k+2)$, see \cite[Prop.~III.C.I.1]{BGM}.
We encode the multiplicities of the eigenvalues of $\nh^*\nh$ on $M=\Gamma\backslash S^3$, restricted to coexact forms, in the Poincar\'e series
\begin{align*}
G_+(z) 
&= 
1+ \sum_{k=0}^\infty \frac{1}{|\Gamma|}\sum_{\gamma\in\Gamma}\chi_k(\gamma)\cdot(\chi^+(\gamma)-1)z^k \\
&=
1+ \frac{1}{|\Gamma|}\sum_{\gamma\in\Gamma}\sum_{k=0}^\infty \chi_k(\gamma)\cdot(\chi^+(\gamma)-1)z^k.
\end{align*}
Similarly, we consider
$$
G_-(z) = 1+ \frac{1}{|\Gamma|}\sum_{\gamma\in\Gamma}\sum_{k=0}^\infty \chi_k(\gamma)\cdot(\chi^-(\gamma)-1)z^k.
$$
Ikeda has shown \cite[p.~81]{Ike} that
$$
\sum_{k=0}^\infty \chi_k(\gamma)z^k = \frac{1-z^2}{\det(1-z\gamma)}.
$$
Thus
$$
G_\pm(z) = 1 + \frac{1-z^2}{|\Gamma|}\sum_{\gamma\in\Gamma}\frac{\chi^\pm(\gamma)-1}{\det(1-z\gamma)} .
$$
From \eqref{eq:Weitzen} we get on $d^\dagger\Omega_{C^\infty}^2(S^3)$ (using $\kappa=1$, $\Ric=2$ and $\Delta=\curl^2$):
$$
(\curl-2)\curl = \nh^*\nh
$$
and hence
$$
(\curl-1)^2 = \nh^*\nh +1 .
$$
Now we observe
\begin{align*}
\E(\curl,2+k) \oplus \E(\curl,-k)
&=
\E(\curl-1,1+k) \oplus \E(\curl-1,-k-1) \\
&=
\E((\curl-1)^2,(1+k)^2) \\
&=
\E((\nh^*\nh +1)|_{d^\dagger\Omega_{C^\infty}^2},(1+k)^2) \\
&=
\E(\nh^*\nh|_{d^\dagger\Omega_{C^\infty}^2},(1+k)^2-1) \\
&=
\E(\nh^*\nh|_{d^\dagger\Omega_{C^\infty}^2},k(k+2)) .
\end{align*}
For our Poincar\'e series this means
\begin{equation*}
F_+(z) + z^2F_-(z)
=
G_+(z).
\end{equation*}
Similarly, we get
\begin{equation*}
F_-(z) + z^2F_+(z)
=
G_-(z).
\end{equation*}
Solving for $F_+$ we find
\begin{align*}
F_+(z)
&=
\frac{1}{1-z^4}(G_+(z)-z^2 G_-(z)) \\
&=
\frac{1}{1+z^2}\Big(1+\frac{1}{|\Gamma|}\sum_{\gamma\in\Gamma} \frac{\chi^+(\gamma)-1 -z^2(\chi^-(\gamma)-1)}{\det(1-z\gamma)}\Big)
\end{align*}
and similarly for $F_-$.
\end{proof}

\section*{Acknowledgments}

I would like to thank Dmitri Vassiliev for directing my attention to the $\curl$-operator on manifolds and Nikolai Saveliev for pointing out interesting references.

\section*{References}

\begin{biblist}

\bib{APS}{article}{
    AUTHOR = {Atiyah, M.F.},
    AUTHOR = {Patodi, V.K.},
    AUTHOR = {Singer, I.M.},
     TITLE = {Spectral asymmetry and {R}iemannian geometry.~{I}},
   JOURNAL = {Math. Proc. Cambridge Philos. Soc.},
    VOLUME = {77},
      YEAR = {1975},
     PAGES = {43--69},
      ISSN = {0305-0041},
       URL = {http://dx.doi.org/10.1017/S0305004100049410},
}

\bib{B1}{article}{
    AUTHOR = {B{\"a}r, C.},
     TITLE = {The {D}irac operator on space forms of positive curvature},
   JOURNAL = {J. Math. Soc. Japan},
    VOLUME = {48},
      YEAR = {1996},
    NUMBER = {1},
     PAGES = {69--83},
      ISSN = {0025-5645},
       URL = {http://dx.doi.org/10.2969/jmsj/04810069},
}

\bib{BGM}{book}{
    Author = {M. Berger},
    Author = {P. Gauduchon},
    Author = {E. Mazet},
    Title = {{Le spectre d'une vari\'et\'e riemannienne}},
    Year = {1971},
    Publisher = {Springer-Verlag, Berlin-Heidelberg-New York},
    Series = {Lecture Notes in Mathematics},
    Volume = {194},
}

\bib{BGV}{book}{
    AUTHOR = {Berline, N.},
    AUTHOR = {Getzler, E.},
    AUTHOR = {Vergne, M.},
     TITLE = {Heat kernels and {D}irac operators},
    SERIES = {Grundlehren der Mathematischen Wissenschaften},
    VOLUME = {298},
 PUBLISHER = {Springer-Verlag, Berlin},
      YEAR = {1992},
     PAGES = {viii+369},
      ISBN = {3-540-53340-0},
       DOI = {10.1007/978-3-642-58088-8},
       URL = {http://dx.doi.org/10.1007/978-3-642-58088-8},
}

\bib{Besse}{book}{
    AUTHOR = {Besse, A.L.},
     TITLE = {Einstein manifolds},
    SERIES = {Ergebnisse der Mathematik und ihrer Grenzgebiete (3)},
    VOLUME = {10},
 PUBLISHER = {Springer-Verlag, Berlin},
      YEAR = {1987},
     PAGES = {xii+510},
      ISBN = {3-540-15279-2},
       DOI = {10.1007/978-3-540-74311-8},
       URL = {http://dx.doi.org/10.1007/978-3-540-74311-8},
}

\bib{BirSol1087}{article}{
    Author = {M.S. {Birman}},
    Author = {M.Z. {Solomyak}},
    Title = {{The Weyl asymptotics of the spectrum of the Maxwell operator for domains with a Lipschitz boundary}},
    Journal = {{Vestn. Leningr. Univ., Math.}},
    ISSN = {0146-924X},
    Volume = {20},
    Number = {3},
    Pages = {15--21},
    Year = {1987},
    Publisher = {Allerton Press, Inc., New York},
}

\bib{Chavel}{book}{
    AUTHOR = {Chavel, I.},
     TITLE = {Eigenvalues in {R}iemannian geometry},
    SERIES = {Pure and Applied Mathematics},
    VOLUME = {115},
 PUBLISHER = {Academic Press, Inc., Orlando, FL},
      YEAR = {1984},
     PAGES = {xiv+362},
      ISBN = {0-12-170640-0},
}

\bib{CT94}{article}{
    AUTHOR = {Cheeger, J.},
    AUTHOR = {Tian, G.},
     TITLE = {On the cone structure at infinity of {R}icci flat manifolds with {E}uclidean volume growth and quadratic curvature decay},
   JOURNAL = {Invent. Math.},
    VOLUME = {118},
      YEAR = {1994},
    NUMBER = {3},
     PAGES = {493--571},
      ISSN = {0020-9910},
       URL = {https://doi.org/10.1007/BF01231543},
}

\bib{DF2008}{incollection}{
    AUTHOR = {Demchenko, M.N.},
    AUTHOR = {Filonov, N.D.},
     TITLE = {Spectral asymptotics of the {M}axwell operator on {L}ipschitz manifolds with boundary},
 BOOKTITLE = {Spectral theory of differential operators},
    SERIES = {Amer. Math. Soc. Transl. Ser. 2},
    VOLUME = {225},
     PAGES = {73--90},
 PUBLISHER = {Amer. Math. Soc., Providence, RI},
      YEAR = {2008},
       URL = {http://dx.doi.org/10.1090/trans2/225/05},
}

\bib{Fil}{article}{
    AUTHOR = {Filonov, N.},
     TITLE = {Weyl asymptotics of the spectrum of the {M}axwell operator in {L}ipschitz domains of arbitrary dimension},
   JOURNAL = {Algebra i Analiz},
    VOLUME = {25},
      YEAR = {2013},
    NUMBER = {1},
     PAGES = {170--215},
      ISSN = {0234-0852},
       URL = {http://dx.doi.org/10.1090/s1061-0022-2013-01282-9},
}

\bib{GaMe}{article}{
    Author = {S. {Gallot}},
    Author = {D. {Meyer}},
    Title = {{Op\'erateur de courbure et laplacien des formes diff\'erentielles d'une vari\'et\'e riemannienne}},
    Journal = {{J. Math. Pures Appl. (9)}},
    ISSN = {0021-7824},
    Volume = {54},
    Pages = {259--284},
    Year = {1975},
    Publisher = {Elsevier (Elsevier Masson), Paris},
}

\bib{Ike}{article}{
    AUTHOR = {Ikeda, A.},
     TITLE = {On the spectrum of a {R}iemannian manifold of positive constant curvature},
   JOURNAL = {Osaka J. Math.},
    VOLUME = {17},
      YEAR = {1980},
    NUMBER = {1},
     PAGES = {75--93},
      ISSN = {0030-6126},
       URL = {http://projecteuclid.org/euclid.ojm/1200772809},
}

\bib{Ivrii}{article}{
  title={Accurate spectral asymptotics for elliptic operators that act in vector bundles},
  author={Ivrii, V.Y.},
  journal={Funct. Anal. Appl.},
  volume={16},
  number={2},
  pages={101--108},
  year={1982},
  publisher={Springer}
}

\bib{IK}{article}{
    Author = {I. {Iwasaki}},
    Author = {K. {Katase}},
     Title = {{On the spectra of Laplace operator on $\Lambda^*(S^n)$}},
    Journal = {{Proc. Japan Acad., Ser. A}},
    ISSN = {0386-2194},
    Volume = {55},
    Pages = {141--145},
    Year = {1979},
    Publisher = {Japan Academy, Ueno Park, Tokyo},
}

\bib{JS}{article}{
    AUTHOR = {Jakobson, D.},
    AUTHOR = {Strohmaier, A.},
     TITLE = {High energy limits of {L}aplace-type and {D}irac-type eigenfunctions and frame flows},
   JOURNAL = {Commun. Math. Phys.},
    VOLUME = {270},
      YEAR = {2007},
    NUMBER = {3},
     PAGES = {813--833},
      ISSN = {0010-3616},
       URL = {https://doi.org/10.1007/s00220-006-0176-0},
}

\bib{Ker}{article}{
    AUTHOR = {Kervaire, M.},
     TITLE = {Courbure int\'egrale g\'en\'eralis\'ee et homotopie},
   JOURNAL = {Math. Ann.},
    VOLUME = {131},
      YEAR = {1956},
     PAGES = {219--252},
      ISSN = {0025-5831},
}

\bib{Mill}{article}{
    author = {Millson, J.J.},
     title = {Chern-Simons invariants of constant curvature manifolds},
      year = {1973},
   journal = {PhD thesis, Univ. of California, Berkeley},
}

\bib{PY}{article}{
    AUTHOR = {Peng, L.},
    AUTHOR = {Yang, L.},
     TITLE = {The curl in seven dimensional space and its applications},
   JOURNAL = {Approx. Theory Appl. (N.S.)},
    VOLUME = {15},
      YEAR = {1999},
    NUMBER = {3},
     PAGES = {66--80},
      ISSN = {1000-9221},
}

\bib{Rosenberg}{book}{
    AUTHOR = {Rosenberg, S.},
     TITLE = {The {L}aplacian on a {R}iemannian manifold},
    SERIES = {London Mathematical Society Student Texts},
    VOLUME = {31},
 PUBLISHER = {Cambridge University Press, Cambridge},
      YEAR = {1997},
     PAGES = {x+172},
      ISBN = {0-521-46300-9; 0-521-46831-0},
}

\bib{Saf}{article}{ 
    Author = {Y.G. {Safarov}},
    Title = {{Asymptotic behavior of the spectrum of the Maxwell operator}},
    Journal = {{J. Sov. Math.}},
    ISSN = {0090-4104},
    Volume = {27},
    Pages = {2655--2661},
    Year = {1984},
    Publisher = {Consultants Bureau, New York},
}

\bib{Ven2010}{article}{
    Author = {N.A. {Veniaminov}},
    Title = {{Estimate for the remainder in the Weyl asymptotics of the spectrum of the Maxwell operator in Lipschitz domains}},
    Journal = {{J. Math. Sci., New York}},
    ISSN = {1072-3374; 1573-8795/e},
    Volume = {169},
    Number = {1},
    Pages = {46--63},
    Year = {2010},
    Publisher = {Springer US, New York, NY},
}

\bib{Warner}{book}{
    AUTHOR = {Warner, F.W.},
     TITLE = {Foundations of differentiable manifolds and {L}ie groups},
    SERIES = {Graduate Texts in Mathematics},
    VOLUME = {94},
 PUBLISHER = {Springer-Verlag, New York-Berlin},
      YEAR = {1983},
     PAGES = {ix+272},
      ISBN = {0-387-90894-3},
}

\bib{Weck}{article}{
    AUTHOR = {Weck, N.},
     TITLE = {Maxwell's boundary value problem on {R}iemannian manifolds with nonsmooth boundaries},
   JOURNAL = {J. Math. Anal. Appl.},
    VOLUME = {46},
      YEAR = {1974},
     PAGES = {410--437},
      ISSN = {0022-247x},
       URL = {http://dx.doi.org/10.1016/0022-247X(74)90250-9},
}

\bib{Weyl}{article}{
    AUTHOR = {Weyl, H.},
     TITLE = {\"{U}ber das {S}pektrum der {H}ohlraumstrahlung},
   JOURNAL = {J. Reine Angew. Math.},
    VOLUME = {141},
      YEAR = {1912},
     PAGES = {163--181},
      ISSN = {0075-4102},
       URL = {http://dx.doi.org/10.1515/crll.1912.141.163},
}

\bib{Weyl52}{article}{
    AUTHOR = {Weyl, H.},
     TITLE = {Die nat\"urlichen {R}andwertaufgaben im {A}u{\ss}enraum f\"ur {S}trahlungsfelder beliebiger {D}imension und beliebigen {R}anges},
   JOURNAL = {Math. Z.},
    VOLUME = {56},
      YEAR = {1952},
     PAGES = {105--119},
      ISSN = {0025-5874},
       URL = {http://dx.doi.org/10.1007/BF01175027},
}

\end{biblist}

\end{document}